\newtheorem{theorem}{Theorem}
\newtheorem*{theorem*}{Theorem}
\newtheorem*{proposition*}{Proposition}
\newtheorem{lemma}[theorem]{Lemma}
\newtheorem{proposition}[theorem]{Proposition}
\newtheorem{claim}[theorem]{Claim}
\newtheorem*{claim*}{Claim}
\newtheorem{fact}[theorem]{Fact}
\newtheorem{corollary}[theorem]{Corollary}
\newtheorem{remark}[theorem]{Remark}
\newtheorem*{namedtheorem}{\theoremname}
\newcommand{\theoremname}{testing}
\newenvironment{named}[1]{\renewcommand{\theoremname}{#1}\begin{namedtheorem}}{\end{namedtheorem}}
\theoremstyle{definition}
\newtheorem{definition}[theorem]{Definition}
\newtheorem*{definition*}{Definition}
\newtheorem*{lemma*}{Lemma}
\numberwithin{equation}{section}
\numberwithin{theorem}{section}
\newcommand{\CG}{\mathcal{G}}
\newcommand{\R}{\mathbb{R}}
\newcommand{\subg}{\mathrm{Sub}_G}
\newcommand{\cosg}{\mathrm{Cos}_G}
\newcommand{\cM}{\mathcal{M}}
\newcommand{\supp}[1]{\mathrm{supp}\left({#1}\right)}
\newcommand{\stab}{\mathrm{stab}}
\newcommand{\comment}[1]{}
\def\dd{\mathrm{d}}
\begin{document}

\title[]{Unimodularity of Invariant Random Subgroups}

\author{Ian Biringer}
\address[I.\ Biringer]{Department of Mathematics, Boston College,
  Chestnut Hill MA, 02467, USA.}
\author{Omer Tamuz}
\address[O.\ Tamuz]{Microsoft Research New England, One Memorial Drive, Cambridge
  MA, 02142, USA.}

\subjclass[2010]{28C10 (primary), 37A20 (secondary)}
\keywords{Invariant random subgroups, invariant measures on
  homogeneous spaces, mass transport principle}
\date{\today}

\begin{abstract}
  An invariant random subgroup $H \leq G$ is a random closed subgroup
  whose law is invariant to conjugation by all elements of $G$. When
  $G$ is locally compact and second countable, we show that for every
  invariant random subgroup $H \leq G$ there almost surely exists an
  invariant measure on $G/H$. Equivalently, the modular function of
  $H$ is almost surely equal to the modular function of $G$,
  restricted to $H$.

  We use this result to construct invariant measures on orbit
  equivalence relations of measure preserving actions. Additionally,
  we prove a mass transport principle for discrete or compact invariant
  random subgroups.
\end{abstract}

\maketitle

\section{Introduction}

Let $G$ be a locally compact, second countable group. We denote by
$\subg$ the space of closed subgroups of $G$, equipped with the
Chabauty topology, and consider the action $G \curvearrowright \subg$
by conjugation.

An {\em invariant random subgroup} (IRS) is a $\subg$-valued random
variable whose law is invariant to conjugation. Any Borel
probability measure preserving action $G \curvearrowright X$ gives an
IRS - the stabilizer of a random point in $X $; this
follows\footnote{This theorem states that given a measurable
  action of a locally compact second countable group on a standard
  probability space $X$, one can endow $X$ with a precompact metrizable
  topology under which the action is continuous. This implies that
  stabilizers are closed subgroups, and that the map that assigns to
  each point its stabilizer is upper semicontinuous and hence
  measurable.}  from the Compact Model Theorem of
Varadarajan~\cite{varadarajan1963groups}*{Theorem 3.2 and its
  corollary}.  In a slight (but standard) abuse of notation we also
refer to a conjugation invariant Borel probability measure on $\subg$
(i.e., the law of an IRS) as an IRS.

Although measure preserving $G$-actions and their stabilizers have
been studied for some time (e.g.,~\cites{stuck1994stabilizers,
  bergeron2004asymptotique}), IRSs were first introduced by Ab{\'e}rt,
Glasner and Vir{\'a}g~\cite{abert2012kesten}, and simultaneously by
Vershik~\cite{vershik2012totally} under a different name.  Since then,
IRSs have appeared in a number of papers, either as direct subjects of
study~\cites{vershik2012totally,bowen2012invariant,
  bowen2012invariantb, creutz2013stabilizers-of-ergodic,
  hartman2013stabilizer}, as probabilistic limits of manifolds with
increasing volume \cite{abert2012growth}, or as tools to understand
stationary group actions~\cites{bowen2010random,
  hartman2012furstenberg}.

The notion of an IRS is a natural weakening of that of a normal
subgroup. As such, it is interesting to understand which properties of
normal subgroups hold for IRSs. This is the spirit
of~\cite{abert2012kesten}, and of our main theorem.

More generally, one can consider the set of $G$-invariant,
$\sigma$-finite Borel measures on $\subg$, which we denote by
$\cM_G(\subg)$. We will call an element of $\cM_G(\subg)$ an {\em
  invariant subgroup measure}.  If $(*)$ is a Borel property of subgroups of
$G$ (e.g.\ `discrete' or `unimodular') then we say that $\lambda \in
\cM_G(\subg)$ is $(*)$ whenever $\lambda$-a.e.\ $H \in \subg$ is
$(*)$.

Our main result is the following theorem.
\begin{theorem}
  \label{thm:unimodular}
  Let $\lambda \in \cM_G(\subg)$ be an invariant subgroup
  measure. Then for all $H$ in the topological support of $\lambda$
  there exists a nontrivial $G$-invariant measure on $G/H$.
\end{theorem}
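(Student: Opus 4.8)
The plan is to reduce the statement to a property of modular functions, get the almost‑everywhere version from unimodularity of the transformation groupoid of the conjugation action, and then promote this to the whole support.

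\emph{Step 1 (Weil's criterion).} Recall that $G/H$ carries a nontrivial $G$‑invariant Radon measure — necessarily unique up to scale — if and only if $\Delta_G|_H=\Delta_H$, where $\Delta_G,\Delta_H$ are the modular functions; equivalently, the continuous homomorphism $\chi_H:=(\Delta_G|_H)\cdot\Delta_H^{-1}\colon H\to\R_{>0}$ is trivial. So the theorem is equivalent to: $\chi_H\equiv 1$ for every $H$ in the topological support of $\lambda$. I will also use the elementary observation that if $H$ is a closed normal subgroup of a locally compact second countable group $N$, then $\Delta_N|_H=\Delta_H$: indeed $N/H$ is again such a group, hence carries a Haar measure, which is $N$‑invariant, so Weil's criterion applied inside $N$ gives the claim.

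\emph{Step 2 (the almost‑everywhere case).} View $G$ as acting on $(\subg,\lambda)$ by conjugation; this is a measure‑preserving action whose isotropy group at $H$ is the normalizer $N_G(H)$, a closed subgroup with $H\trianglelefteq N_G(H)$. Form the transformation groupoid $\mathcal{G}=G\ltimes\subg$ with its tautological Haar system and the measure $m=\mathrm{Haar}_G\otimes\lambda$. A change of variables, using conjugation‑invariance of $\lambda$ and the definition of $\Delta_G$, shows that $m$ is left‑invariant under $\mathcal{G}$ and that the modular function of $(\mathcal{G},m)$ is the pullback of $\Delta_G$ from $G$. On the other hand, the orbit of $H$ is $G$‑equivariantly $G/N_G(H)$, and disintegrating the Haar system over the orbit map exhibits, at $\lambda$‑a.e.\ $H$, the reference fibre measure over $H$ as a Haar measure of $N_G(H)$; the structure theory of measured groupoids then forces the modular function of $(\mathcal{G},m)$, restricted to $N_G(H)$, to equal $\Delta_{N_G(H)}$. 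Comparing the two descriptions gives $\Delta_G|_{N_G(H)}=\Delta_{N_G(H)}$ for $\lambda$‑a.e.\ $H$. Combined with Step 1 applied to $H\trianglelefteq N_G(H)$, this yields $\Delta_G(h)=\Delta_{N_G(H)}(h)=\Delta_H(h)$ for all $h\in H$, i.e.\ $\chi_H\equiv 1$ for $\lambda$‑a.e.\ $H$.

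\emph{Step 3 (passing to the support).} Next I would show that $\mathcal{U}=\{H\in\subg:\chi_H\equiv 1\}$ is closed in the Chabauty topology: if $H_k\to H_0$ with $\chi_{H_k}\equiv 1$ and $h_0\in H_0$, choose $h_k\in H_k$ with $h_k\to h_0$; then $\Delta_G(h_k)\to\Delta_G(h_0)$, while (semi)continuity of $(H,h)\mapsto\Delta_H(h)$ along $(H_k,h_k)$ forces $\Delta_{H_0}(h_0)=\lim_k\Delta_{H_k}(h_k)=\lim_k\Delta_G(h_k)=\Delta_G(h_0)$, so $\chi_{H_0}(h_0)=1$. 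A closed set that is $\lambda$‑conull contains the topological support of $\lambda$, which finishes the proof.

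\emph{Main obstacle.} I expect the crux to be the disintegration in Step 2 when $N_G(H)$ is neither discrete nor open in $G$, hence Haar‑null: one cannot then simply restrict the Haar system to the isotropy, and must instead work with the $\rho$‑function of the canonical quasi‑invariant measures on the orbits $G/N_G(H)$ and verify that it is \emph{exact} invariance of $\lambda$ — not mere quasi‑invariance — that makes its contribution vanish. (For discrete or compact isotropy the Haar system does restrict, which yields a clean mass transport principle and is presumably why the general case is stated separately.) The continuity assertion in Step 3 is a second delicate point; should it fail in the strength stated, one can instead try to construct the invariant measure on $G/H_0$ directly as a suitably renormalized Chabauty limit of the invariant measures on $G/H_k$, the difficulty being to rule out escape of mass.
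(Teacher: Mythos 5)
Your Steps 1 and 3 are sound, and in fact mirror what the paper does in its appendix: the reduction to $\mu_G|_H=\mu_H$ is Weil's criterion, and the closedness of $\{H\in\subg : \mu_G|_H=\mu_H\}$ in the Chabauty topology is exactly Lemma~\ref{closed}, proved there via a continuous normalized selection $H\mapsto m_f(H)$ of Haar measures (Claim~\ref{haarmeasures}); the continuity you hedge on does hold in the strength you need. The detour through the normalizer --- proving $\Delta_G|_{N_G(H)}=\Delta_{N_G(H)}$ for a.e.\ $H$ and then using $H\trianglelefteq N_G(H)$ to descend to $H$ --- is also a correct reduction.

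The gap is Step 2, and it is not a technical loose end: it is the entire content of the theorem. The assertion that ``the structure theory of measured groupoids forces the modular function of $(\mathcal{G},m)$, restricted to the isotropy, to equal $\Delta_{N_G(H)}$ a.e.''\ is precisely the statement to be proved (for the conjugation action, whose stabilizer at $H$ is $N_G(H)$); it is not an off-the-shelf black box, and you cite no specific theorem that delivers it in this generality. Concretely, when $N_G(H)$ is neither open, discrete, nor compact, the orbit map $G\to G/N_G(H)$ has Haar-null fibres, so disintegrating the Haar system over it yields fibre measures defined only up to a null set of cosets, with no canonical identification of ``the reference fibre over $H$'' as a Haar measure of $N_G(H)$; and even granting such a choice orbit by orbit, one must make it Borel in $H$ and then convert the \emph{global} invariance of $\mathrm{Haar}_G\otimes\lambda$ into an a.e.\ statement about fibre and factor measures. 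This is exactly what the paper supplies: Proposition~\ref{prop:constructing-nu} constructs a Borel-parametrized disintegration of $\ell$ with prescribed left-$H$-invariant, equivariantly chosen fibre measures $\eta_{Hg}$, and Proposition~\ref{prop:equivalent} disintegrates $\lambda\times\ell$ over $\cosg$ and uses uniqueness of factor measures (Proposition~\ref{uniquefactors}) to produce a left-invariant $\nu$ on $\cosg$ with $\pi_{l*}\nu\equiv\lambda$, whose $\pi_l$-fibre measures are the desired invariant measures on $G/H$. Your ``main obstacle'' paragraph correctly identifies this as the crux, but identifying the obstacle is not the same as overcoming it; as written, the core step of the argument is missing.
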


There exists a $G$-invariant measure on $G/H$ if and only if the
modular function $\mu_G$ of $G$ restricts to the modular function
$\mu_H $ of $H $ (cf.~\cite[Theorem 1, pg 139]{nachbin1965}).  Hence a corollary of
Theorem~\ref{thm:unimodular} is that an invariant subgroup measure of
a unimodular group $G$ must be supported on unimodular subgroups.

When $G$ is not unimodular, the kernel of the modular function is a
unimodular, closed, normal proper subgroup of $G$.  It can be thought
of as the `unimodular radical' of $G$: a normal unimodular subgroup
that includes all other normal unimodular subgroups.  Theorem
\ref{thm:unimodular} implies that any \emph{unimodular} subgroup
$H\leq G$ that lies in the support of an invariant subgroup measure
must be contained in $\ker \mu_G$.  That is,

\begin{corollary}
  \label{cor:modular}
  If an invariant subgroup measure $\lambda \in \cM_G(\subg)$ is unimodular, then $H \leq \ker \mu_G$ for all $H$ in the support of $\lambda$.
\end{corollary}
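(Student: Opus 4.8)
The plan is to reduce Corollary~\ref{cor:modular} to Theorem~\ref{thm:unimodular} via the classical criterion, recalled above, that $G/H$ carries a nonzero $G$-invariant measure precisely when $\mu_G|_H = \mu_H$; for a \emph{unimodular} subgroup $H$ (so $\mu_H \equiv 1$) this says that $G/H$ has a nonzero $G$-invariant measure if and only if $H \leq \ker\mu_G$. Writing $\mathrm{Sub}_{\ker\mu_G} = \{H \in \subg : H \leq \ker\mu_G\}$, the assertion of Corollary~\ref{cor:modular} is exactly that $\supp{\lambda} \subseteq \mathrm{Sub}_{\ker\mu_G}$. I would first record two easy points: $\mathrm{Sub}_{\ker\mu_G}$ is closed in the Chabauty topology (if closed subgroups $H_n \leq \ker\mu_G$ Chabauty-converge to $H$, then every element of $H$ is a limit of elements of the $H_n$, hence lies in the closed set $\ker\mu_G$, using that $\subg$ is metrizable), and that $\subg$ is second countable (indeed compact metrizable), so that the open set $\subg \setminus \supp{\lambda}$, being a countable union of $\lambda$-null open sets, is itself $\lambda$-null.

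The core of the argument is then to show $\lambda(C) = 0$ for $C := \subg \setminus \mathrm{Sub}_{\ker\mu_G}$, which is open. Split $C$ into the $H \in C$ that are unimodular and those that are not. The latter set is contained in the set of non-unimodular subgroups, which is $\lambda$-null by the hypothesis that $\lambda$ is unimodular. For the former: if $H \in C$ is unimodular then $H \not\leq \ker\mu_G$, so there is $h \in H$ with $\mu_G(h) \neq 1 = \mu_H(h)$; hence $\mu_G|_H \neq \mu_H$, and by the classical criterion there is no nonzero $G$-invariant measure on $G/H$. The contrapositive of Theorem~\ref{thm:unimodular} then gives $H \notin \supp{\lambda}$. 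Thus the unimodular part of $C$ is contained in $\subg \setminus \supp{\lambda}$, which is $\lambda$-null as observed above. Therefore $\lambda(C) = 0$.

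Finally, since $C$ is open and $\lambda$-null, it is disjoint from $\supp{\lambda}$, which is precisely the statement that $H \leq \ker\mu_G$ for every $H \in \supp{\lambda}$.

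I do not expect a genuine obstacle: all the substance sits in Theorem~\ref{thm:unimodular}, and the remainder is bookkeeping with the Chabauty topology. The one point demanding care is that ``$\lambda$ is unimodular'' is an almost-everywhere statement, whereas the conclusion concerns \emph{every} $H$ in the support, so one cannot simply intersect two full-measure sets; instead one discards the non-unimodular subgroups as a null set and applies Theorem~\ref{thm:unimodular} pointwise to the unimodular remainder to see that it avoids the support outside $\ker\mu_G$.
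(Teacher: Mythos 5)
Your proof is correct and follows the same route the paper takes (and leaves largely implicit): combine Theorem~\ref{thm:unimodular} with the criterion that $G/H$ carries a nonzero $G$-invariant measure if and only if $\mu_G|_H=\mu_H$, which for unimodular $H$ reads $H\leq\ker\mu_G$. The extra bookkeeping you supply --- upgrading ``$\lambda$-a.e.\ $H$ is unimodular'' to a statement about \emph{every} $H$ in the support by showing $\{H\in\subg : H\leq\ker\mu_G\}$ is Chabauty-closed and its complement is $\lambda$-null --- is exactly the point the paper glosses over, and you handle it correctly.
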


\noindent Note that, in particular, this corollary applies whenever
$\lambda$ is supported on discrete subgroups.  It would be interesting
to understand when the same result can be proved for other
`radicals'. For example, in~\cite{bader2014amenable} it is shown that
every amenable IRS is included in the amenable radical.

\vspace{2mm}

Our proof of Theorem~\ref{thm:unimodular} yields a slightly stronger
theorem (Theorem~\ref{stronger}), in which the $G$-invariant measures
on the coset spaces $G/H$ can be chosen to vary measurably with $H $.
As an application, we construct invariant measures on orbit
equivalence relations.

Suppose that $G \curvearrowright (X,\zeta)$ is a measure preserving
action on a $\sigma$-finite Borel measure space.  Let $E \subseteq
X \times X$ denote the orbit equivalence relation
$$E = \{(x,g x)\,:\,x \in X, g \in G\},$$ 
which we consider with the $G$-action $g(x,y) = (x,g y)$.  Let
$p_l \colon E \to X$ be the projection on the left coordinate 
$p_l(x,y) = x$. 

In many applications the interesting case is when $\zeta$ is a
probability measure. More generally, we prove:
\begin{corollary}
  \label{cor:actions}

  Let $G \curvearrowright (X,\zeta)$ be a measure
  preserving action on a $\sigma $-finite Borel measure space, with
  orbit equivalence relation $E$.

  Then there exists a Borel family of $G$-invariant, $\sigma$-finite
  measures $\nu_x$ on $E$, with $\nu_x$ supported on the fiber
  $p_l^{-1}(x)$, and such that
  \begin{align*}
    \nu = \int_X\nu_x\,\dd\zeta(x)
  \end{align*}
  is a $G$-invariant, $\sigma$-finite measure on
  $E$.
\end{corollary}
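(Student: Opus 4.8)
The plan is to identify each fiber $p_l^{-1}(x)$ with a coset space and to transport onto it an invariant measure supplied by Theorem~\ref{stronger}, the measurable refinement of Theorem~\ref{thm:unimodular}. First I would observe that $p_l^{-1}(x)=\{x\}\times Gx$, that the orbit map $g\mapsto gx$ identifies $G/\stab(x)$ with $Gx$ and hence with $p_l^{-1}(x)$, and that under this identification the restriction to $p_l^{-1}(x)$ of the $G$-action $g\cdot(x,y)=(x,gy)$ becomes the action of $G$ on $G/\stab(x)$ by left translation. Consequently a nontrivial $G$-invariant $\sigma$-finite measure $m_{\stab(x)}$ on $G/\stab(x)$ transports to a measure $\nu_x$ on $p_l^{-1}(x)\subseteq E$ invariant under the $G$-action on $E$; and since this action fixes every fiber $p_l^{-1}(x)$ setwise, the measure $\nu=\int_X\nu_x\,\dd\zeta(x)$ is automatically $G$-invariant once each $\nu_x$ is, a step that uses no invariance of $\zeta$.

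To produce the $m_{\stab(x)}$, I would use that the stabilizer map $s\colon x\mapsto\stab(x)$ is Borel (see the introduction's footnote) and satisfies $s(gx)=g\,s(x)\,g^{-1}$, so that, $\zeta$ being $G$-invariant, the pushforward $\lambda:=s_*\zeta$ is a conjugation-invariant Borel measure on $\subg$ whose topological support contains $s(x)$ for $\zeta$-a.e.\ $x$; should $\lambda$ fail to be $\sigma$-finite, a routine decomposition of $X$ replaces it with a $\sigma$-finite conjugation-invariant measure having the same support. Theorem~\ref{stronger} then provides a Borel family $H\mapsto m_H$ of nontrivial $G$-invariant $\sigma$-finite measures on $G/H$, defined for $H$ in the support of $\lambda$, and I would take $m_{\stab(x)}$ to be its value at $H=s(x)$. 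Borel measurability of $s$, of $H\mapsto m_H$, and of the orbit maps then makes $x\mapsto\nu_x$ Borel, so that $\nu$ is a well-defined $G$-invariant Borel measure on $E$ with the asserted disintegration.

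The hard part will be the $\sigma$-finiteness of $\nu$: each $\nu_x$ is $\sigma$-finite, but the family must be rescaled $x$-measurably so that the masses of the pieces of an exhaustion of $G/\stab(x)$ remain integrable against $\zeta$. To arrange this I would fix a countable family $\{\phi_n\}\subseteq C_c(G)$ of nonnegative functions, taken to be left translates of a single fixed function, whose positivity sets cover $G$. For any closed $H\leq G$ admitting an invariant measure, Weil's integration formula (cf.~\cite{nachbin1965}) evaluates $\int_{G/H}\bar\phi_{n,H}\,\dd m_H$ --- where $\bar\phi_{n,H}(gH):=\int_H\phi_n(gh)\,\dd\mu_H(h)$ is continuous and nonnegative on $G/H$ --- as $\int_G\phi_n\,\dd\mu_G$, once $H\mapsto m_H$ is normalised compatibly with a measurable choice of Haar measures $\mu_H$, as recorded in Theorem~\ref{stronger}; by left-invariance of $\mu_G$ this value is a single constant $c$, independent of $n$ and of $H$. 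Since for each $x$ the function $gH\mapsto\sup_n\bar\phi_{n,\stab(x)}(gH)$ is strictly positive, the function $w_0$ on $E$ defined by $w_0(x,gx):=\sum_n 2^{-n}\min(1,\bar\phi_{n,\stab(x)}(g\stab(x)))$ is strictly positive and Borel with $\int_{p_l^{-1}(x)}w_0\,\dd\nu_x\leq c$ for every $x$. Choosing in addition a strictly positive Borel $r\colon X\to(0,\infty)$ with $\int_X r\,\dd\zeta<\infty$ (possible as $\zeta$ is $\sigma$-finite) and setting $w(x,y):=w_0(x,y)\,r(x)$, one obtains a strictly positive Borel $w$ on $E$ with $\int_E w\,\dd\nu\leq c\int_X r\,\dd\zeta<\infty$; a measure carrying a strictly positive integrable function is $\sigma$-finite, so $\nu$ is $\sigma$-finite and the construction is complete. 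By contrast, the $G$-invariance of $\nu$ and the identification of fibers with coset spaces are routine, so the genuine work is confined to this uniform normalisation.
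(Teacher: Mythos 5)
Your construction is essentially the paper's proof: push $\zeta$ forward under the (Borel, equivariant) stabilizer map to get a conjugation-invariant measure on $\subg$, apply Theorem~\ref{stronger} to obtain a Borel family of invariant measures on the coset spaces $G/\stab(x)$, transport these to the fibers $p_l^{-1}(x)\cong G/\stab(x)$, and integrate against $\zeta$; the $G$-invariance of $\nu$ then follows fiberwise exactly as you say. So the core of the proposal is correct and matches the paper.

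Two remarks on the points where you go beyond (or around) the written proof. First, your Weil-formula normalization ensuring $\sigma$-finiteness of $\nu$ is a genuine addition --- the paper simply asserts that $\tau=\int_X\nu^x\,\dd\zeta(x)$ is $\sigma$-finite, and since the $\nu^x$ are only canonical up to a fiberwise positive scalar, some uniform normalization is indeed needed; your argument works, except that Theorem~\ref{stronger} does not ``record'' the Weil-compatible normalization (its $\nu^H$ come from a Rohlin disintegration), so you must first rescale the family by the Borel function $H\mapsto\bigl(\int_{G/H}\bar\phi_{1,H}\,\dd\nu^H\bigr)^{-1}$ (finite and positive since $\bar\phi_{1,H}$ is compactly supported and positive near the identity coset) before the constant $c$ becomes independent of $H$. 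Second, your parenthetical that a ``routine decomposition'' handles the case where $s_*\zeta$ fails to be $\sigma$-finite is the one soft spot: a $\sigma$-finite exhaustion of $X$ need not consist of $G$-invariant sets, so the restricted pushforwards need not be conjugation-invariant, and producing a $\sigma$-finite \emph{invariant} subgroup measure seeing the same null sets takes more than a sentence. The paper shares this gap (it asserts without comment that $\stab_*\zeta\in\cM_G(\subg)$), so this is not a defect of your approach relative to the paper's, but it is not routine either.
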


When the action is free, each fiber $p_l^{-1}(x)$ is identified with
$G $ and one can take $\nu_x$ to be the Haar measure. When $G$ is
countable, the $\nu_x$ are counting measures and Corollary
\ref{cor:actions} is due to Feldman and Moore \cite [Theorem
2]{feldman1975ergodic}. Such a measure $\nu$ can be used, for example,
to define the so called groupoid representation $G \curvearrowright
L^2(\nu)$ (see~\cite{renault1980groupoid},
and~\cite{vershik2011nonfree} for a recent application).

The final goal of this paper is to formulate a version of the `mass
transport principle' (MTP) for invariant subgroup measures.  The MTP
has proved to be a useful tool in the theory of random graphs
\cites{Benjaminirecurrence,Aldousprocesses} and in percolation theory
\cites{Benjaminigroup,Haggstrominfinite}.  Versions of the MTP have
also appeared in the theory of foliations for some time (see, e.g., the
definition of an invariant transverse measure in~\cite[page 82]{Mooreglobal}).

A {\em mass transport} on a graph $\Gamma=(V,E)$ is a positive function $f
\colon V \times V \to \R$, where $f(x,y)$ determines how much ``mass''
is transported from $x$ to $y$.  We assume that $f$ is invariant to
the diagonal action of the automorphism group $\mathrm{Aut}(\Gamma)$, so
that $f(x,y)$ depends only on how $x$ and $y$ interact with the
geometry of the graph.  When $\mathrm{Aut}(\Gamma)$ is transitive and
unimodular, a {\em mass transport principle} (MTP) applies: the total
amount of mass transported from each vertex is equal to the amount
transported to it.  Namely, for all $v \in V$,
\begin{align}
  \sum_{w \in V}f(v,w) = \sum_{w \in V}f(w,v). \label{transitiveMTP}
\end{align}

The notion of a mass transport principle can be generalized to random
rooted graphs.  Let $\CG_\bullet$ be the space of isomorphism classes
of rooted graphs, equipped with the topology of convergence on finite
neighborhoods. Similarly, let $\CG_{\bullet\bullet}$ be the space of
isomorphism classes of doubly rooted graphs
(see~\cite{Benjaminirecurrence}). A random rooted graph `satisfies the
mass transport principle' when its law, a measure $\lambda$ on
$\CG_\bullet$, satisfies
\begin{align}
  \label{eq:graph-mtp}
  \int_{\CG_\bullet}\ \sum_{w\in \Gamma}f(\Gamma,v,w)\, \dd\lambda(\Gamma,v) =
  \int_{\CG_\bullet}\ \sum_{w\in \Gamma}f(\Gamma,w,v)\, \dd\lambda(\Gamma,v),
\end{align}
for all positive Borel functions $f: \CG_{\bullet\bullet}
\longrightarrow \R$.  Random rooted graphs that satisfy the MTP are
also called \emph {unimodular random graphs}.  Let a random rooted
graph $\Gamma$ be almost surely a single transitive rooted graph. Then
$\Gamma$ is unimodular if and only if the automorphism group
$\mathrm{Aut}(\Gamma)$ is a unimodular topological group, in which
case~\eqref{eq:graph-mtp} is exactly the MTP given in~\eqref
{transitiveMTP}.

One should view an MTP as a replacement for group-invariance of a
measure, which is especially useful when no group action is present.
Our interest, however, lies with measures on $\subg$, where $G$ acts
by conjugation.  We show that for measures supported on discrete or
compact subgroups of $G $, conjugation invariance is equivalent to a
suitable MTP. This generalizes the results of Ab{\'e}rt, Glasner and
Vir{\'a}g \cite{abert2012kesten} for discrete $G$. We elaborate on
this connection in Section~\ref{sec:mtp}. Discrete IRSs are
particularly interesting in the case of Lie groups: the non-atomic
IRSs of a connected simple Lie group are supported on discrete
groups~\cite{abert2012growth}*{Theorem 2.6}.

Here, we will state our theorem in the case that $G $ is unimodular,
saving the more general statement for Section~\ref{sec:mtp}.  So,
suppose that $G $ is a unimodular, locally compact, second countable
group, and fix a Haar measure $\ell$ on $G $.  If $H$ is a compact
subgroup of $G $, let $\nu_H$ be the push forward of $\ell$ to
$H\backslash G$, while if $H $ is a discrete subgroup of $G$, let
$\nu_H$ be the measure on $H\backslash G$ obtained by \emph {locally
  pushing forward} $\ell$ under the covering map $G \longrightarrow
H\backslash G$. This is the unique measure with
\begin{align}
  \label{pushforward1}\ell = \int_{G/H}\eta_{H g} \, \dd\nu_H(H g),
\end{align}
where $\eta_{H g}$ is the counting measure on $H g$.  See Section~\ref{sec:mtp} for details.

Denote by $\cosg$ the space of cosets of closed subgroups of $G$:
\begin{align*}
  \cosg = \{Hg \,:\,  H \in \subg, \, g\in G\}.
\end{align*}
We discuss this space and its topology in Section~\ref{sec:disintegration}.
\begin{theorem}[Mass Transport Principle]
  \label{thm:mtp-intro}
  Let $G$ be unimodular, and let $\lambda$ be a $\sigma$-finite Borel
  measure on $\subg$ such that $\lambda$-a.e.\ $H\in\subg $ is
  discrete or compact.  

Then $\lambda $ is conjugation invariant
  (i.e., an invariant subgroup measure) if and only if for every nonnegative Borel
  function $f \colon \cosg \rightarrow \R$,

  \begin{align*}
    &  \int_{\subg} \int_{H\backslash G} f(Hg)\, \dd\nu_H(Hg) \,
    \dd\lambda(H) \nonumber \\
    &=\int_{\subg} \int_{H\backslash G}f(g^{-1}H) \,
    \dd\nu_H(Hg)\, \dd\lambda(H).
  \end{align*}
\end{theorem}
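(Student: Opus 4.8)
The plan is to derive the Mass Transport Principle from Theorem~\ref{thm:unimodular} (and its measurable strengthening, Theorem~\ref{stronger}), reducing the two-sided coset-space integral identity to the known conjugation-invariance of the family of measures $\nu_H$. First I would set up the bookkeeping on the space $\cosg$: the map $\Phi\colon\subg\times G\to\cosg$, $(H,g)\mapsto Hg$, and the two natural ``source'' and ``target'' maps $s(Hg)=H$ and, for the right-hand side, the assignment $Hg\mapsto g^{-1}H$. The point is that $g^{-1}H$ is again a subgroup (the conjugate $g^{-1}Hg$ followed by the coset structure is exactly the subgroup $g^{-1}Hg$ with base point — I need to be careful here: $g^{-1}H$ as a subset of $G$ equals the coset of the subgroup $g^{-1}Hg$ through $g^{-1}$), so the ``transport from $Hg$ to $g^{-1}H$'' really is a transport between a coset and a subgroup. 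I would first treat the discrete and compact cases somewhat in parallel, since in both we have a canonical finite-or-Haar measure $\nu_H$ on $H\backslash G$ characterized by the disintegration \eqref{pushforward1}.

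The key computational step is a Fubini/unfolding argument. Fix a nonnegative Borel $f\colon\cosg\to\R$. On the left-hand side I would unfold $\int_{H\backslash G}f(Hg)\,\dd\nu_H(Hg)$ against $\dd\lambda(H)$ and use the defining property of $\nu_H$ together with the Haar measure $\ell$ on $G$ to rewrite the double integral as an integral over $\subg\times G$ of $f(Hg)$ against $\dd\lambda(H)\,\dd\ell(g)$ — in the compact case this is literally the pushforward relation, and in the discrete case it is \eqref{pushforward1} applied fiberwise. Then, on this product space, perform the substitution $g\mapsto g^{-1}$ (legal since $G$ is unimodular, so $\ell$ is inversion-invariant) and then the conjugation substitution $H\mapsto gHg^{-1}$, using conjugation-invariance of $\lambda$, to transform $f(Hg)$ into $f(g^{-1}H)$ while preserving the measure. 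Reassembling via the same disintegration identity in reverse yields the right-hand side. For the converse direction — MTP implies conjugation invariance — I would run the same unfolding but now with test functions $f$ of a special form (e.g.\ $f(Hg)$ depending only on the subgroup part $H$ through a bump function on $\subg$, times something that integrates to a constant on each fiber), so that the MTP identity collapses to $\int \varphi\,\dd\lambda = \int \varphi\circ(\text{conj}_g)\,\dd\lambda$ for enough $\varphi$ and $g$ to conclude invariance.

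The main obstacle I expect is the measurability and topology of $\cosg$ and the families $H\mapsto\nu_H$: I need $\Phi$ to be Borel, I need the disintegration \eqref{pushforward1} to hold with $\nu_H$ depending measurably on $H$, and I need the substitution $H\mapsto gHg^{-1}$ to interact correctly with $\nu_H$, i.e.\ that $\nu_{gHg^{-1}}$ is the pushforward of $\nu_H$ under the induced map $H\backslash G\to (gHg^{-1})\backslash G$. This last equivariance is the heart of the matter, and it is exactly where Theorem~\ref{thm:unimodular} / Theorem~\ref{stronger} enters: the existence of the $G$-invariant measure on $G/H$ (equivalently the matching of modular functions) is what makes $\nu_H$ well-defined and canonically normalized in a conjugation-equivariant way; without it the left pushforward in the discrete case, or the quotient measure in the compact case, would not transform correctly. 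I would therefore quote the relevant equivariance as a lemma (to be proved in Section~\ref{sec:mtp} alongside the detailed construction of $\nu_H$), and present the Fubini argument above as the main content of the proof, flagging that the non-unimodular generalization — deferred to Section~\ref{sec:mtp} — requires inserting modular-function factors at the inversion step.
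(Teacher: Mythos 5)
Your overall strategy --- transfer everything to $\subg\times G$, apply $g\mapsto g^{-1}$ (inversion invariance of $\ell$, valid since $G$ is unimodular) followed by conjugation (invariance of $\lambda$), and reassemble --- is in essence the paper's map $\varphi(H,g)=(H^{g^{-1}},g^{-1})$, so the skeleton is right. But the central ``unfolding'' step has a genuine gap in the discrete case: the identity
\[
\int_{H\backslash G} f(Hg)\,\dd\nu_H(Hg) \;=\; \int_G f(Hg)\,\dd\ell(g)
\]
is false when $H$ is infinite and discrete. By \eqref{pushforward1} the right-hand side equals $\int_{H\backslash G}\eta_{Hg}(Hg)\,f(Hg)\,\dd\nu_H(Hg)$, and $\eta_{Hg}(Hg)=\#H=\infty$. (Both $f(Hg)$ and $f(g^{-1}H)$ are constant on right cosets $Hg$, so the same failure occurs on both sides and does not simply cancel.) To carry out your Fubini argument one must insert a normalizing density $F'(H,g)$ with $\int F'(H,\cdot)\,\dd\eta_{Hg}=1$ (as in the proof of Proposition~\ref{prop:constructing-nu}), write $\int_{H\backslash G}f(Hg)\,\dd\nu_H=\int_G f(Hg)F'(H,g)\,\dd\ell(g)$, and then track how $F'$ transforms under your two substitutions. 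The identity one needs at the end is $\int_{Hg}F'\bigl(g^{-1}Hg,\,k^{-1}\bigr)\,\dd\eta_{Hg}(k)=1$, which holds precisely because inversion carries $\eta_{Hg}$ to $\eta_{g^{-1}H}$ --- i.e.\ because the counting measure (resp.\ the Haar probability measure on a compact coset) is invariant under the right action of $H^{g^{-1}}$ as well as the left action of $H$. That bi-invariance, resting on the elementary fact that discrete and compact groups are unimodular, is the actual heart of the proof; it is exactly where the hypothesis ``discrete or compact'' enters, and it is what fails for general closed $H$.

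Relatedly, you attribute the key equivariance to Theorem~\ref{thm:unimodular}/Theorem~\ref{stronger} and to the statement that $\nu_{gHg^{-1}}$ is the pushforward of $\nu_H$ under the induced map. Neither is the operative ingredient: the latter is equivariance under the \emph{left} $G$-action (it yields left-invariance of $\nu=\int\nu_H\,\dd\lambda$, as in Proposition~\ref{prop:equivalent}), whereas the MTP concerns the involution $Hg\mapsto g^{-1}H$, which is not a left translation; and for discrete or compact subgroups of a unimodular $G$ the conclusion of Theorem~\ref{thm:unimodular} (that $\mu_H=\mu_G|_H=1$) is automatic, so that theorem contributes nothing here. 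The paper sidesteps the unfolding issue entirely by viewing $\lambda\times\ell=\int_{\cosg}\delta_H\times\eta_{Hg}\,\dd\nu(Hg)$ as a disintegration over $\cosg$, checking that $\varphi$ permutes the fiber measures ($\varphi_*(\delta_H\times\eta_{Hg})=\delta_{H^{g^{-1}}}\times\eta_{g^{-1}H}$, the same bi-invariance fact), and invoking uniqueness of factor measures (Proposition~\ref{uniquefactors}); this also delivers the converse direction for free as an ``if and only if,'' whereas your converse via special test functions is only sketched and would need a separate smearing/normalization argument in the continuous setting.
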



Theorem~\ref{thm:mtp-intro} will be used by Ab\'ert and Biringer
in~\cite{abert-biringer} to show that certain invariant random
subgroups of continuous groups correspond to `unimodular random
manifolds', i.e.\ measures on the space of rooted Riemannian manifolds
satisfying a mass transport principle.

We should note that this is not the first version of a mass transport
principle that applies in the continuous setting.  In
\cite{benjamini2001percolation}, Benjamini and Schramm give a version
of the MTP for the hyperbolic plane.

To interpret Theorem \ref{thm:mtp-intro}, one should view $\cosg$ as
foliated by the right coset spaces $H \backslash G$, where $H $ ranges through
$\subg$.  The MTP says that the measure $\nu $ obtained by integrating
the measures $\nu_H$ against $\lambda $ is invariant under the
involution $Hg \mapsto g^{-1}H $ of $\cosg$.

An alternative, appealing interpretation of the MTP is the following.
Call a closed normal subgroup $N \lhd G$ {\em co-unimodular} if $G/N$
is unimodular; that is, if there exists a bi-invariant measure on
$G/N$.  Analogously, we call an IRS $\lambda$ co-unimodular if there
exists a Borel measure $\nu$ on $\cosg$ that projects to $\lambda$ and
is invariant to both the left and right $G$-action. In
Section~\ref{sec:bi-invariant} we show that when $G$ is unimodular,
then there is an MTP for $\lambda$ if and only if $\lambda$ is
co-unimodular.

A particularly aesthetic version of the MTP arises if we also define
for each discrete $H\leq G$, a measure $\nu^H$ on $G/H$ by
locally pushing forward $\ell$ with respect to the covering map $G
\longrightarrow G/H$.  
That is,
\begin{align}
  \label{pushforward2}\ell = \int_{H\backslash G}\eta_{g H} \,
  \dd\nu^H(g H),
\end{align}
where $\eta_{g H} $ is the counting measure on $Hg $.  It follows
from~\eqref{pushforward1} and~\eqref {pushforward2} that the
involution $Hg \mapsto g^{-1}H $ sends $\nu _ H $ to $\nu^H$. So, the
MTP can be rephrased as
\begin{align*}
   & \int_{\subg} \int_{H \backslash  G}f(Hg) \, \dd\nu_H(Hg)\, \dd\lambda(H) \\
&= \ \ \ \int_{\subg} \int_{G/H}  f(g H)\, \dd\nu^H(g H) \, \dd\lambda(H).
\end{align*}
In other words, the measure on $\cosg$ obtained by integrating the
natural measures on right coset spaces $H \backslash G $ against $\lambda$ is the same as the measure on $\cosg$ obtained by integrating the natural
measures on left coset spaces $G/H$ against $\lambda $.

\subsection{Acknowledgments}

The authors are indebted to Lewis Bowen, who first posed the question
that led to Theorem~\ref{thm:unimodular}, suggested that we include
	the statement of Theorem \ref{cor:actions}, and inspired the discussion in Remark \ref{longremark}.  The first author is
partially supported by NSF grant DMS-1308678 and would like to thank
Miklos Ab\'ert for numerous conversations, in particular those
relating to the mass transport principle, as without his input the
statement given here might not have been considered or solved.  The
second author would like to thank Yair Hartman for enlightening
discussions. Both authors would also like to thank the referee for greatly improving the readability and accuracy of the paper.

\section{Cosets, subgroups and a disintegration result}
\label{sec:disintegration}

We start by defining the coset space of $G$, $\cosg$, which is the set
\begin{align*}
  \cosg = \{g H \,:\, g\in G,\, H \in \subg\},
\end{align*}
equipped with the Fell topology of closed subsets of $G$. This topology is locally compact, second countable and Hausdorff, hence Polish. Note that
since $g H = g H g^{-1}g = H^g g$, an equivalent definition is
\begin{align*}
  \cosg = \{Hg \,:\, g\in G,\, H \in \subg\}. 
\end{align*}
The group $G $ acts on $\cosg$ from the left.  When viewing $\cosg$ as
the set of left cosets, the action of $k\in G$ is given by $k(g H) =
k g H$, and when considering right cosets we have $k(Hg) =
(k H k^{-1})(kg) = H^k k g$.  

Adopting the perspective of right cosets, consider the maps
\begin{equation*}
  \begin{array}{r c l  c l}
    \subg \times G & \overset{\sigma_r}{\longrightarrow} &\cosg & \overset {\pi_r}{\longrightarrow} & \subg,\\
     (H, g)&\longmapsto     &H g &\longmapsto     & H. 
\end{array}
\end{equation*}
These maps are all $G$-equivariant, where the actions are
$(H,g) \overset {k}{\mapsto} (k H k^{-1},kg)$, $Hg \overset {k}{\mapsto}
k H k^{-1}(kg)$, and $H \overset {k}{\mapsto} k H k^{-1}$.  Note that
$\pi_r \circ \sigma_r$ is the projection $(H,g) \mapsto H$.  We will
also need the maps
\begin{equation*}
  \begin{array}{r c l c  l}
    \subg \times G & \overset{\sigma_l}{\longrightarrow} &\cosg & \overset {\pi_l}{\longrightarrow} & \subg,\\
     (H, g)&\longmapsto     &g H &\longmapsto     & H. 
\end{array}
\end{equation*}
When viewing $\cosg$ as right cosets, $\pi_l(Hg) = \pi_l(g\cdot  g^{-1}Hg) = g^{-1}Hg.$

\vspace {2mm}

Suppose that $G $ is a locally compact, second countable group. The main result of this section is the following disintegration theorem.

\begin {proposition}\label{prop:constructing-nu}
  Fix a left Haar measure $\ell$ on $G$, and a Borel map
  $$\cosg \longrightarrow \cM(G), \ \ \ Hg \longmapsto \eta_{Hg} $$
  such that each $\eta_{Hg}$ is nonzero, left $H$-invariant and
  supported on $Hg $.
  
  Then for each $H\in\subg$ there is a unique Borel measure $\nu_H$ on
  $H \backslash G\subset \cosg$ such that $$ \ell=\int_{H \backslash
    G} \eta_{Hg} \, \dd \nu_H(Hg).$$ Moreover, the map $\subg
  \longrightarrow \cM(\cosg), \ H \longmapsto \nu_H$ is Borel.
\end {proposition}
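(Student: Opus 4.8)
The plan is, for each closed subgroup $H$, to obtain $\nu_H$ by disintegrating the left Haar measure $\ell$ along the quotient map $q_H\colon G\to H\backslash G$, $g\mapsto Hg$ (thus $q_H=\sigma_r(H,\cdot)$, with image $H\backslash G=\pi_r^{-1}(H)$, a Borel subset of $\cosg$), and then to make this fiberwise construction Borel in $H$ by means of a normalizing function built directly from the given family $\eta$. Here is that function. Fix a compact exhaustion $\emptyset=K_0\subseteq K_1\subseteq\cdots$ with $\bigcup_n K_n=G$ (possible since $G$ is $\sigma$-compact), and define $\Phi\colon\subg\times G\to(0,\infty)$ by $\Phi(H,g)=2^{-n}\big/\big(1+\eta_{Hg}(K_n\setminus K_{n-1})\big)$ when $g\in K_n\setminus K_{n-1}$. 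As $Hg\mapsto\eta_{Hg}$ and $\sigma_r$ are Borel, $\Phi$ is jointly Borel and everywhere positive, and for any coset $C$ of any $H$ one has $\int_G\Phi(H,\cdot)\,\dd\eta_C=\sum_n 2^{-n}\,\eta_C(K_n\setminus K_{n-1})\big/\big(1+\eta_C(K_n\setminus K_{n-1})\big)\in(0,1]$, a number $N(C)$ depending only on $C$ and Borel in $C\in\cosg$. Then $\psi(H,g):=\Phi(H,g)/N(Hg)$ is jointly Borel, positive, and satisfies $\int_G\psi(H,\cdot)\,\dd\eta_C=1$ for every coset $C$ of $H$; write $\psi_H:=\psi(H,\cdot)$.

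Now fix $H$. Apply the disintegration theorem to the $\sigma$-finite measure $\ell$ and the Borel map $q_H$ (splitting $\ell$ into finite pieces first if the cited version requires it) to get a $\sigma$-finite Borel measure $\nu_H^0$ on $H\backslash G$ and a Borel family $(\theta_C)$ of $\sigma$-finite measures, $\theta_C$ concentrated on $C$, with $\ell=\int_{H\backslash G}\theta_C\,\dd\nu_H^0(C)$. Since $\ell$ is left $H$-invariant and $q_H\circ L_h=q_H$ for $h\in H$, pushing this disintegration forward by $L_h$ exhibits $\big((L_h)_*\theta_C\big)$ as another disintegration of $\ell$ over the same $\nu_H^0$, so essential uniqueness gives $(L_h)_*\theta_C=\theta_C$ for $\nu_H^0$-a.e.\ $C$; letting $h$ range over a countable dense subgroup of $H$ and invoking vague continuity of $h\mapsto(L_h)_*\theta_C$, we conclude $\theta_C$ is left $H$-invariant for $\nu_H^0$-a.e.\ $C$. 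Because $H$ acts simply transitively on $C$ by left translation, any such $\theta_C$ is a nonnegative finite multiple $c(C)\,\eta_C$ of $\eta_C$ (uniqueness of Haar among $\sigma$-finite invariant measures), with $c(C)=\int_G\Phi(H,\cdot)\,\dd\theta_C\big/N(C)$ Borel; discarding the cosets with $c(C)=0$ (on which $\theta_C=0$), we may take $c>0$ and set $\nu_H:=c\cdot\nu_H^0$. Then $\nu_H$ is a $\sigma$-finite Borel measure on $H\backslash G$ and $\ell=\int_{H\backslash G}\eta_C\,\dd\nu_H(C)$.

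For uniqueness: if $\tilde\nu$ on $H\backslash G$ also satisfies $\ell=\int_{H\backslash G}\eta_C\,\dd\tilde\nu(C)$, then for Borel $A\subseteq H\backslash G$, Tonelli's theorem gives $\int_G\psi_H(g)\,\mathbbm{1}_{A}(q_H(g))\,\dd\ell(g)=\int_{H\backslash G}\mathbbm{1}_{A}(C)\big(\int_C\psi_H\,\dd\eta_C\big)\dd\tilde\nu(C)=\tilde\nu(A)$, so $\tilde\nu$ is determined by $\ell$; hence $\tilde\nu=\nu_H$, and in fact $\nu_H=(q_H)_*(\psi_H\ell)$. For the Borel dependence, this last identity reads, for Borel $B\subseteq\cosg$, $\nu_H(B)=\int_G\psi(H,g)\,\mathbbm{1}_{B}(\sigma_r(H,g))\,\dd\ell(g)$; the integrand is jointly Borel and nonnegative, so $H\mapsto\nu_H(B)$ is Borel by Tonelli's theorem (for the $\sigma$-finite measure $\ell$). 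That is exactly the assertion that $H\mapsto\nu_H$ is a Borel map $\subg\to\cM(\cosg)$.

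The step I expect to be the genuine obstacle is passing from an arbitrary disintegration to one with \emph{left $H$-invariant} fibers $\theta_C$: extracting a single $\nu_H^0$-conull set of cosets on which $\theta_C$ is invariant under all of $H$ rather than just a countable dense subgroup, which rests on separability of $H$ together with vague continuity of $h\mapsto(L_h)_*\theta_C$ for the merely $\sigma$-finite measures $\theta_C$. The remaining ingredients — existence of the disintegration for $\sigma$-finite $\ell$, Borel measurability of the fiber integrals defining $N$ and $c$, and the concluding Tonelli measurability — are routine. (Alternatively one can transport $\ell$ through a Borel cross-section $s\colon H\backslash G\to G$ of $q_H$ and disintegrate over the transversal $s(H\backslash G)$, but the same invariance point recurs there.)
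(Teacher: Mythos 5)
Your proof is correct and follows essentially the same route as the paper's: Rohlin disintegration of $\ell$ over $H\backslash G$, almost-everywhere left $H$-invariance of the fiber measures via essential uniqueness of disintegrations plus separability of $H$, rescaling by the Radon--Nikodym factor against $\eta_{Hg}$, and a normalizing function integrating to $1$ on each coset to get both uniqueness and Borel dependence on $H$. The only cosmetic difference is that your normalizer $\psi$ is a Borel step function built from a compact exhaustion, whereas the paper uses a continuous partition of unity; the step you flag as the genuine obstacle (upgrading invariance from a countable dense subgroup to all of $H$) is handled in the paper by the same appeal to separability.
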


Succinctly, the proposition states that left $H$-invariant measures on
cosets $Hg$ can be realized as fiber measures in a disintegration of
any left Haar measure on $G$, and that if the fiber measures are Borel
parametrized, so are the resulting factor measures.

\begin {proof}
  Since $\ell$ is $\sigma$-finite, the push forward measure on $H
  \backslash G$ is equivalent to a $\sigma$-finite measure $\hat
  \nu_H$; for instance, one can take $\hat \nu_H$ to be the push
  forward of any probability measure on $G$ that is equivalent to
  $\ell$. Note that the push forward of $\ell$ is only itself
  $\sigma$-finite if $H$ is compact.

  Applying Rohlin's Disintegration Theorem
  (see~\cite{Simmonsconditional}*{Theorem 6.3}), there is a
  disintegration
  \begin{align} \label{firstdis} \ell =
    \int_{H \backslash G}\hat\eta_{Hg}\,\dd\hat\nu_H(Hg),
  \end{align}
  where $Hg\longmapsto \hat\eta_{Hg}$ is a measurable parametrization of
  $\sigma$-finite measures on $G $, and $\hat\eta_{Hg}$ is supported on the coset $H g$.
  
  We claim that $\hat\nu_H$-almost every $\hat\eta_{Hg}$ is left
  $H$-invariant: left multiplication by $h \in H$ on $G$ leaves
  invariant each right coset $Hg$, and so
  $$\ell = h_*(\ell)  = \int_{H \backslash G} h_*(\hat\eta_{Hg}) \,
  \dd\hat\nu_H(Hg).$$  By the uniqueness of
  disintegrations with a given factor measure, we have
  $h_*(\hat\eta_{Hg})=\hat\eta_{Hg}$ for $\hat\nu_H$-a.e.\ coset $Hg$.  It follows
  from the separability of $H$ that $\hat\eta_{Hg}$ is $H$-invariant for
  $\hat\nu_H$-a.e.\ $Hg$.  

  Equation \eqref{firstdis} remains unchanged if we replace a
  $\hat\nu_H$-negligible number of the measures $\hat\eta_{Hg}$ with
  $\eta_{Hg}$, so we may as well assume from now on that every
  $\hat\eta_{Hg}$ is nonzero and left $H$-invariant. By the uniqueness
  of the Haar measure, each $\hat\eta_{Hg}$ is a positive scalar
  multiple of $\eta_{Hg}$. Let
  \begin{align}\label{eq:f}
    f : \cosg \to \R^+, \ \ f(Hg) = \frac{\dd\hat\eta_{Hg}}{\dd\eta_{Hg}},
  \end{align}
  be the function whose value gives this multiple. Define the measure
  $\nu_H$ on $H \backslash G$ by
  \begin{align*}
    d\nu_H(Hg) = f(Hg) \, d\hat\nu_H(Hg).
  \end{align*}
  Then we have
  \begin{align} \label{productdisintegration}
    \ell &= \int_{H \backslash G}\hat\eta_{Hg}\, \dd\hat\nu_H(Hg) \nonumber\\
    &= \int_{H \backslash G} \eta_{Hg} \cdot f(Hg) \, \dd\hat\nu_H(Hg) \nonumber\\
    &= \int_{H \backslash G}\eta_{Hg}\, \dd\nu_H(Hg), 
  \end{align}
  as required in the statement of the proposition. As the $\eta_{Hg}$
  are fixed, $\nu_H$ is the unique measure on $H \backslash G$
  satisfying \eqref{productdisintegration}.
  
  It remains to show that the $\nu_H$ are Borel parametrized, when
  regarded as measures on the space of all cosets $\cosg$. This is a
  consequence of the fact that the $\eta_{Hg}$ are Borel
  parametrized, but we will never-the-less give a careful proof.

  Fix a positive, continuous function $F : G \longrightarrow\R $ such
  that $$\int F \, \dd\eta_{Hg} < \infty \ \ \forall Hg \in \cosg.$$
  One way to produce such a function is as follows.  Pick a compact
  neighborhood $B$ of the identity, and a locally finite cover of $G$
  by open sets $B_n$ with $B_n^{-1}B_n \subset B$. If $h g \in Hg \cap
  B_n^{-1}$, then
  \begin {align*}
    \eta_{Hg}(B_n) &=\left( (h g)^{-1}_*\eta_{Hg}\right )(h g B_n)
    \\ &=\eta_{g^{-1}Hg}( h g B_n) \\ &< \eta_{g^{-1}Hg}(B).
  \end{align*}
  So, for each $Hg$, there is an upper bound on $\eta_{Hg}(B_n)$ that
  is independent of $n$. The function $F=\sum_n \frac{1}{2^n} \cdot
  \rho_n $, where $\rho_n$ is a partition of unity subordinate to the
  cover $B_n$, then has the desired properties.

  Using $F$, we define a positive, Borel function
  $$F' : \subg \times G \longrightarrow\R, \ \ F'(H,g)=\frac {F(g)}{\int F  \, \dd\eta_{Hg} }.$$
  By definition, this $F'$ has the property that $\int F'(H,\cdot) \,
  \dd\eta_{Hg} =1$ for all $Hg\in \cosg$. Given a continuous function
  $\varphi :\cosg \longrightarrow \R$, we have
  \begin {align*}
    \int \varphi(Hg) \, \dd\nu_H(Hg) &= \int \varphi(Hg) \left( \int_{k\in Hg} F'(H,k) \, \dd \eta_{Hg}(k)\right ) \, \dd \nu_H(Hg) \\
    &=\int \varphi(Hg) F'(H,g)\, \dd \ell(g).
  \end{align*}
  The last expression is Borel in $H$, so we have shown that
  integrating a fixed continuous function $\varphi$ against the
  measures $\nu_H$ gives a Borel function in $H$, implying that the
  map $H \longmapsto \nu_H$ is Borel.
\end {proof}

Finally, let us discuss a convenient construction of such $\eta_{Hg}$.  In Claim \ref{haarmeasures}, we show that there is a continuous map $$m
: \subg \longrightarrow \cM(G)$$ such that each $m (H)$ is a left Haar
measure on $H\subset G$. Define
$$ \eta_{Hg} := g_*m(g^{-1}Hg).$$ Each $\eta_{Hg}$ is a $\sigma$-finite Borel measure supported on the coset
$Hg \subset G$, and if $h\in H$ we have 
$$h_*(\eta_{Hg})
=h_*g_* m(g^{-1}H g)
= g_*(g^{-1} h g)_*m(g^{-1}H g)
=\eta_{H g},$$
which shows that $\eta_{H g}$ is left $H $-invariant,
and is well defined in the sense that it depends only on the coset $H
g$ and not on the representative $g$.
The $\eta_{Hg}$ are also permuted by the left $G $-action: if $k\in G$ then 
\begin{align*}
k_*(\eta_{Hg})=(k g)_*m(g^{-1}H g)
= \eta_{(k H k^{-1})k g} 
= \eta_{k H g}.
\end{align*}

So, to summarize this construction:

\begin {fact}\label{etas}
There is a family of measures $\eta_{Hg}$ as required by Proposition \ref{prop:constructing-nu} such that the map $Hg \longmapsto\eta_{Hg}$ is left $G$-equivariant: 
$$k_*(\eta_{Hg})=\eta_{k H g}, \ \ \ \forall k\in G, \ Hg \in \cosg.$$
\end {fact}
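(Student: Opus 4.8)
The statement is a summary of the construction displayed immediately before it, so the only real input is Claim~\ref{haarmeasures}, which asserts a continuous map $m\colon\subg\to\cM(G)$ with $m(H)$ a left Haar measure on $H\subset G$; granting this, I would verify the properties of $\eta_{Hg}:=g_*\,m(g^{-1}Hg)$ in turn. First, well-definedness and left $H$-invariance are one and the same point: for $h\in H$,
\[
 h_*\eta_{Hg}=h_*g_*\,m(g^{-1}Hg)=g_*\,(g^{-1}hg)_*\,m(g^{-1}Hg)=g_*\,m(g^{-1}Hg)=\eta_{Hg},
\]
since $g^{-1}hg$ lies in the group $g^{-1}Hg$ and $m(g^{-1}Hg)$ is left-invariant under that group; applying this with a second representative $g'=hg$ of the same coset shows $\eta_{Hg}$ does not depend on $g$. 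Next, $\eta_{Hg}$ is nonzero, $\sigma$-finite and supported on $Hg$ because $m(g^{-1}Hg)$ is a nonzero $\sigma$-finite Radon measure supported on the closed subgroup $g^{-1}Hg$ and $x\mapsto gx$ is a homeomorphism of $G$ carrying $g^{-1}Hg$ onto $Hg$. Finally, the left $G$-equivariance is the computation
\[
 k_*\eta_{Hg}=(kg)_*\,m(g^{-1}Hg)=(kg)_*\,m\bigl((kg)^{-1}(kHk^{-1})(kg)\bigr)=\eta_{(kHk^{-1})(kg)}=\eta_{kHg}.
\]

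For the measurability of $Hg\mapsto\eta_{Hg}$ I would observe that $(K,g)\mapsto g_*\,m(K)$ is continuous from $\subg\times G$ to $\cM(G)$, since $m$ is continuous and $(\mu,g)\mapsto g_*\mu$ is vaguely continuous, and then compose it with the continuous map $\pi_l\colon Hg\mapsto g^{-1}Hg$ of Section~\ref{sec:disintegration} and with a Borel selection $s$ of the closed-valued multifunction $Hg\mapsto Hg\subset G$ (which exists by the Kuratowski--Ryll-Nardzewski selection theorem), writing $\eta_{Hg}=s(Hg)_*\,m\bigl(s(Hg)^{-1}\!\cdot Hg\bigr)$; the equivariance just proved guarantees that this value is independent of the chosen selection.

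Thus everything reduces to Claim~\ref{haarmeasures}, which I expect to be the crux. My plan for it is to pin down a normalization. Build a strictly positive continuous $F\colon G\to\R$ exactly as in the proof of Proposition~\ref{prop:constructing-nu}: a sum $\sum_n 2^{-n}\rho_n$ over a partition of unity $\{\rho_n\}$ subordinate to a locally finite cover by relatively compact open sets $B_n$ with $B_n^{-1}B_n\subset B$, for a fixed compact neighborhood $B$ of the identity. The translation estimate of that proof, applied now to an arbitrary closed subgroup $K$ with left Haar measure $\mu$, gives $\mu(B_n)\le\mu(B)<\infty$ for every $n$, so $0<\int_K F\,\dd\mu<\infty$; hence each $H$ carries a unique left Haar measure $m(H)$ with $\int_H F\,\dd m(H)=1$. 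The work is to show that $H\mapsto m(H)$ is continuous. Given $H_n\to H$ in $\subg$, the normalization forces the uniform bound $m(H_n)(B)\le 1/\min_B F$, which together with $m(H_n)(B_j)\le m(H_n)(B)$ for every $j$ makes the finite measures $F\,m(H_n)$ uniformly bounded and uniformly tight; a subsequence of the $m(H_n)$ therefore converges vaguely to a Radon measure $\mu$ with $\int F\,\dd\mu=1$. Convergence $H_n\to H$ in $\subg$ forces $\mu$ to be supported on $H$ and invariant under left translation by every $h\in H$ --- approximate $h$ by $h_n\in H_n$ and use $(h_n)_*\,m(H_n)=m(H_n)$ --- so by uniqueness of Haar measure $\mu$ is a positive multiple of $m(H)$, and $\int F\,\dd\mu=1$ pins the multiple to $1$. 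As every subsequential vague limit of $m(H_n)$ equals $m(H)$, the whole sequence converges, proving continuity. The delicate steps here are the uniform tightness of $\{F\,m(H_n)\}$ and the passage from vague convergence to convergence against the non-compactly-supported $F$; both are made to work by the particular construction of $F$.
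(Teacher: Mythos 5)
Your construction of the $\eta_{Hg}$ and the verification of left $H$-invariance, well-definedness, and left $G$-equivariance are exactly the paper's: it too sets $\eta_{Hg}=g_*m(g^{-1}Hg)$ for the continuous Haar-measure selection $m$ of Claim~\ref{haarmeasures} and runs the same two push-forward computations, so for the Fact itself your argument is the paper's argument. You go beyond it in two places. First, you make the Borel dependence $Hg\mapsto\eta_{Hg}$ explicit via a Kuratowski--Ryll-Nardzewski selection of a representative $g$; the paper leaves this measurability implicit even though Proposition~\ref{prop:constructing-nu} requires it, and your observation that equivariance makes the value independent of the selection is the right way to close that small loop. Second, you reprove Claim~\ref{haarmeasures} rather than quoting it, and there your route differs mildly from the appendix: the paper normalizes Haar measures against a fixed compactly supported continuous $f$ with $f(1)=1$ and cites Bourbaki for weak* compactness of the normalized family, whereas you normalize against the strictly positive, non-compactly-supported $F=\sum_n 2^{-n}\rho_n$ and obtain compactness by hand from the translation bound $m(H)(B_j)\le m(H)(B)$ together with a tightness estimate. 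Both normalizations work; the paper's choice sidesteps the non-compact-support bookkeeping you correctly flag as the delicate point, while yours reuses the cover $\{B_n\}$ already built for Proposition~\ref{prop:constructing-nu}. I see no gap.
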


Note that for some classes of subgroups $H$, there are `natural' choices for a left Haar measure $m(H)$.  For instance, if $H$ is discrete, one can take $m(H)$ to be the counting measure, in which case the measures $\eta_{Hg}$ will all be counting measures as well. When $H$ is compact, one can take $m(H)$ to be the Haar probability measure. 

As `discrete' and `compact' are both Borel properties of subgroups, the map $H \longmapsto m(H)$ above (and therefore the measures $\eta_{Hg}$) can be adjusted to agree with these natural choices on such subgroups. This will be important in Section \ref{sec:mtp}.

\section{The proof of Theorem \ref{thm:unimodular}}
\label{proof}
\vspace{2mm}

Our main result is the following theorem.

\begin {theorem}\label{stronger}
  Let $\lambda \in \cM_G(\subg)$ be an invariant subgroup
  measure. Then there exists a Borel map 
  $$\subg \longrightarrow \mathcal M(\cosg), \ \ H\longmapsto \nu^ H $$
  such that for $\lambda$-a.e.\ $H$ the measure $\nu^H$ is nonzero,
  $G$-invariant and supported on the subset $G/H \subset \cosg$. 
\end {theorem}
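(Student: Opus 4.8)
The plan is to construct a single $\sigma$-finite, left-$G$-invariant measure $\nu$ on $\cosg$, and to obtain the $\nu^H$ as the conditional measures of a disintegration of $\nu$ along the map $\pi_l\colon\cosg\to\subg$. The key structural point is that $\pi_l$ is $G$-invariant, $\pi_l(kC)=\pi_l(C)$, while its fibre over $H$ is $\pi_l^{-1}(H)=G/H$ — exactly the orbit of the left $G$-action through any point of that fibre, on which $G$ acts in the usual way. Hence disintegrating a left-$G$-invariant $\nu$ along $\pi_l$ automatically produces left-$G$-invariant conditional measures supported on the fibres $G/H$; this is precisely the conclusion of the theorem, provided the disintegration may be taken over $\lambda$ itself.

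To build $\nu$: fix a left Haar measure $\ell$ on $G$ and the left-$G$-equivariant family $\{\eta_{Hg}\}$ of Fact~\ref{etas}, and let $H\mapsto\nu_H$ be the Borel family produced by Proposition~\ref{prop:constructing-nu}, so that $\nu_H$ is supported on $H\backslash G\subset\cosg$ and $\ell=\int_{H\backslash G}\eta_{Hg}\,\dd\nu_H(Hg)$. Applying left translation by $k$, which carries $H\backslash G$ onto $(kHk^{-1})\backslash G$, and using $k_*\eta_{Hg}=\eta_{kHg}$ together with $k_*\ell=\ell$, the uniqueness clause of Proposition~\ref{prop:constructing-nu} gives $k_*\nu_H=\nu_{kHk^{-1}}$. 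Setting $\nu:=\int_{\subg}\nu_H\,\dd\lambda(H)$, conjugation invariance of $\lambda$ yields $k_*\nu=\int_{\subg}\nu_{kHk^{-1}}\,\dd\lambda(H)=\int_{\subg}\nu_H\,\dd\lambda(H)=\nu$, so $\nu$ is left-$G$-invariant. (That $\nu$ is $\sigma$-finite is a routine verification, using the local finiteness of $\ell$ and the normalised weight function $F'$ from the proof of Proposition~\ref{prop:constructing-nu}; I will not dwell on it.)

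The heart of the argument is the computation of $(\pi_l)_*\nu$. From the identity $\int\varphi(Hg)\,\dd\nu_H(Hg)=\int_G\varphi(Hg)\,F'(H,g)\,\dd\ell(g)$ in the proof of Proposition~\ref{prop:constructing-nu} — applied with $\varphi$ the indicator of $\pi_l^{-1}(A)$ and using $\pi_l(Hg)=g^{-1}Hg$ — one gets, for every Borel $A\subseteq\subg$,
\[
(\pi_l)_*\nu(A)=\int_{\subg}\int_G\ind{g^{-1}Hg\in A}\,F'(H,g)\,\dd\ell(g)\,\dd\lambda(H).
\]
Swapping the order of integration (Tonelli) and, for each fixed $g$, substituting $H\mapsto gHg^{-1}$ — a $\lambda$-preserving bijection of $\subg$, by conjugation invariance — this becomes $\int_{\subg}\ind{H\in A}\,w(H)\,\dd\lambda(H)$, where $w(H):=\int_G F'(gHg^{-1},g)\,\dd\ell(g)$. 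Since $F'>0$ everywhere and $\ell$ is a Haar measure, $w(H)\in(0,\infty]$ is strictly positive; thus $(\pi_l)_*\nu=w\cdot\lambda$ and, in particular, $(\pi_l)_*\nu$ and $\lambda$ have the same null sets.

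It follows (by Rohlin's disintegration theorem in the $\sigma$-finite form used in the proof of Proposition~\ref{prop:constructing-nu}, which permits disintegrating over any $\sigma$-finite measure equivalent to the pushforward) that there is a Borel map $H\mapsto\nu^H\in\cM(\cosg)$ with $\nu^H$ supported on $\pi_l^{-1}(H)=G/H$ and $\nu=\int_{\subg}\nu^H\,\dd\lambda(H)$; comparing total masses gives $\|\nu^H\|=w(H)>0$ for $\lambda$-a.e.\ $H$, so $\nu^H\neq0$ for $\lambda$-a.e.\ $H$. Finally, for a fixed $k\in G$, applying $k_*$ to the disintegration and using $k_*\nu=\nu$ together with the fact that $k$ preserves each fibre of $\pi_l$, the essential uniqueness of disintegrations gives $k_*\nu^H=\nu^H$ for $\lambda$-a.e.\ $H$; running this over a countable dense subset of $G$ and invoking continuity of the $G$-action on $\cM(\cosg)$ shows that $\nu^H$ is left-$G$-invariant on $G/H$ for $\lambda$-a.e.\ $H$, which is the assertion of the theorem. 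The step I expect to be the real obstacle is the computation of $(\pi_l)_*\nu$: it converts the left-invariance of $\nu$ into information about the \emph{other} projection, and it is the explicit weighted-pushforward description of $\nu_H$, together with a second use of conjugation invariance, that makes $(\pi_l)_*\nu$ comparable to $\lambda$; a lesser technical point is verifying the $\sigma$-finiteness of $\nu$.
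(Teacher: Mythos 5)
Your proof is correct and takes essentially the same route as the paper's: you construct the same measure $\nu=\int_{\subg}\nu_H\,\dd\lambda(H)$, establish left $G$-invariance and $\pi_{l*}\nu\equiv\lambda$ from conjugation-invariance of $\lambda$ (your Tonelli/change-of-variables computation of $(\pi_l)_*\nu$ is exactly the explicit form of the paper's observation that $\psi(H,g)=(H^{g^{-1}},g)$ preserves $\lambda\times\ell$), and then disintegrate $\nu$ along $\pi_l$ over $\lambda$. The remaining differences are presentational, e.g.\ you deduce $k_*\nu_H=\nu_{kHk^{-1}}$ directly from the uniqueness clause of Proposition~\ref{prop:constructing-nu} rather than via the equivariant disintegration of $\lambda\times\ell$ over $\subg\times G$.
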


As we show in Lemma~\ref{closed}, the set of all $H \in \subg$ such
that $G/H$ as an invariant measure is closed.  Therefore, we will
obtain as a corollary of Theorem~\ref{stronger} our main theorem from
the introduction:

\begin{named}{Theorem~\ref{thm:unimodular}}
  Let $\lambda \in \cM_G(\subg)$ be an invariant subgroup
  measure. Then for all $H$ in the topological support of $\lambda$
  there exists a nontrivial $G$-invariant measure on $G/H$.
\end{named}

We devote the remainder of this section to proving
Theorem~\ref{stronger}, and so fix an invariant subgroup measure
$\lambda \in \cM_G(\subg)$. 

\begin {proposition}
  \label{prop:equivalent}
  There is a left $G$-invariant, $\sigma$-finite Borel measure $\nu$ on
  $\cosg$ such that 
$\pi_{l*}\nu \equiv \lambda \equiv \pi_{r*}\nu.$
\end{proposition}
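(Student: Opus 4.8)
The plan is to realize $\nu$ by integrating, against $\lambda$, the natural family of measures on the right coset spaces $H\backslash G$. Fix a left Haar measure $\ell$ on $G$ and the left $G$-equivariant family $Hg\mapsto\eta_{Hg}$ of nonzero, left-$H$-invariant measures supported on the cosets $Hg$ given by Fact~\ref{etas}. Proposition~\ref{prop:constructing-nu} produces, for each $H$, a nonzero $\sigma$-finite measure $\nu_H$ on $H\backslash G\subset\cosg$ with $\ell=\int_{H\backslash G}\eta_{Hg}\,\dd\nu_H(Hg)$, depending Borel-measurably on $H$; set $\nu=\int_{\subg}\nu_H\,\dd\lambda(H)$. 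Since $\nu_H$ is supported on $H\backslash G=\pi_r^{-1}(H)$ and is nonzero, for Borel $A\subseteq\subg$ one has $\pi_{r*}\nu(A)=\int_A\nu_H(H\backslash G)\,\dd\lambda(H)$, which vanishes exactly when $\lambda(A)=0$, so $\pi_{r*}\nu\equiv\lambda$. For $\sigma$-finiteness I would use the positive continuous $F$ on $G$ with $\int F\,\dd\eta_{Hg}<\infty$ for all $Hg$ and $\int F\,\dd\ell<\infty$ built in the proof of Proposition~\ref{prop:constructing-nu}: then $Hg\mapsto\int F\,\dd\eta_{Hg}$ is a positive Borel function on $\cosg$ whose $\nu$-integral over $\pi_r^{-1}(A)$ equals $\bigl(\int F\,\dd\ell\bigr)\lambda(A)$, and decomposing $\subg$ into sets of finite $\lambda$-measure exhibits $\nu$ as $\sigma$-finite.

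Next I would check left $G$-invariance of $\nu$, which follows from the uniqueness clause of Proposition~\ref{prop:constructing-nu}. For $k\in G$, pushing the defining relation for $\nu_H$ forward by left translation by $k$ and using $k_*\eta_{Hg}=\eta_{kHg}$ (Fact~\ref{etas}) together with $k_*\ell=\ell$ shows that $k_*\nu_H$ is a measure on $(kHk^{-1})\backslash G$ satisfying the defining relation for $\nu_{kHk^{-1}}$; by uniqueness $k_*\nu_H=\nu_{kHk^{-1}}$. Hence $k_*\nu=\int_{\subg}\nu_{kHk^{-1}}\,\dd\lambda(H)=\int_{\subg}\nu_H\,\dd\lambda(H)=\nu$, where the middle equality is exactly the conjugation invariance of $\lambda$.

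The main point, which I expect to be the genuine obstacle, is $\pi_{l*}\nu\equiv\lambda$. Since $\pi_l(Hg)=g^{-1}Hg$, for Borel $A\subseteq\subg$ we have $\pi_{l*}\nu(A)=\int_{\subg}\nu_H\bigl(\{Hg:\ g^{-1}Hg\in A\}\bigr)\,\dd\lambda(H)$. The set $S_H(A):=\{g\in G:\ g^{-1}Hg\in A\}$ is left-$H$-invariant, hence a union of cosets $Hg$; as each $\eta_{Hg}$ is nonzero and supported on $Hg$, the number $\eta_{Hg}(S_H(A))$ is either $0$ or $\eta_{Hg}(G)>0$, and is positive exactly for those $Hg$ with $g^{-1}Hg\in A$. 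Integrating against $\nu_H$ and using $\ell=\int_{H\backslash G}\eta_{Hg}\,\dd\nu_H(Hg)$ then gives $\nu_H(\{Hg:\ g^{-1}Hg\in A\})=0\iff\ell(S_H(A))=0$. Therefore $\pi_{l*}\nu(A)=0$ iff $\ell(S_H(A))=0$ for $\lambda$-a.e.\ $H$, i.e.\ iff $(\lambda\times\ell)(\widetilde A)=0$ where $\widetilde A=\{(H,g):\ g^{-1}Hg\in A\}$. Now $\widetilde A=\Phi^{-1}(A\times G)$ for the Borel involution $\Phi(H,g)=(g^{-1}Hg,\,g^{-1})$ of $\subg\times G$, and writing $\Phi=\Phi_1\circ\Phi_2$ with $\Phi_2(H,g)=(g^{-1}Hg,g)$ and $\Phi_1(H,g)=(H,g^{-1})$: by Fubini and conjugation invariance of $\lambda$ (for each fixed $g$ the map $H\mapsto g^{-1}Hg$ preserves $\lambda$) one gets $(\Phi_2)_*(\lambda\times\ell)=\lambda\times\ell$, while $(\Phi_1)_*(\lambda\times\ell)=\lambda\times\check\ell$ with $\check\ell$ the image of $\ell$ under inversion, a right Haar measure and hence equivalent to $\ell$. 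Thus $\Phi_*(\lambda\times\ell)\equiv\lambda\times\ell$, so $(\lambda\times\ell)(\widetilde A)=0\iff(\lambda\times\ell)(A\times G)=0\iff\lambda(A)=0$, giving $\pi_{l*}\nu\equiv\lambda$. The delicate points to watch are that the naive pushforward of $\ell$ to $H\backslash G$ is not $\sigma$-finite for noncompact $H$ — which is why one works with the $\eta_{Hg}$ and Proposition~\ref{prop:constructing-nu} rather than a direct pushforward — and keeping the chain of measurability and null-set equivalences (especially the step $\eta_{Hg}(S_H(A))\in\{0,\eta_{Hg}(G)\}$) clean.
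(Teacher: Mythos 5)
Your proposal is correct and follows essentially the same route as the paper: you build $\nu=\int_{\subg}\nu_H\,\dd\lambda(H)$ from Fact~\ref{etas} and Proposition~\ref{prop:constructing-nu}, and the key point for $\pi_{l*}\nu\equiv\lambda$ is the same in both arguments, namely that conjugation invariance of $\lambda$ makes $(H,g)\mapsto(g^{-1}Hg,g)$ preserve $\lambda\times\ell$ (the paper's map $\psi$; your extra inversion factor $\Phi_1$ is harmless but redundant, since $\widetilde A$ is already $\Phi_2^{-1}(A\times G)$). The only cosmetic difference is that you get left-invariance from the uniqueness clause of Proposition~\ref{prop:constructing-nu} applied to each $\nu_H$, where the paper invokes uniqueness of factor measures for the disintegration of $\lambda\times\ell$; these are equivalent, and your explicit $\sigma$-finiteness check is a welcome addition.
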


Here, recall that $\pi_l$ and $\pi_r$ are the maps $\cosg
\longrightarrow\subg$ taking $g H \longmapsto H$ and $Hg \longmapsto
H$, respectively. 

\vspace{2mm}

Before proving Proposition~\ref{prop:equivalent} we show that it
implies Theorem~\ref{stronger}, and in fact is equivalent to it: Given
Theorem~\ref{stronger}, the measure $\nu$ is obtained by integrating
against $\lambda $:
\begin {equation}\nu=\int_{\subg} \nu^H \, \dd \lambda(H).\label{eq:disintegrate-nu}\end {equation}
This $\nu$ is $G$-invariant since each $\nu^H$ is $G$-invariant, and $\pi_{l*}\nu \equiv \lambda$ since $\nu^H$ is nonzero and supported on $\pi_l^{-1}(H)$. 

Conversely, suppose Proposition~\ref{prop:equivalent} is true. Since $\pi_{l*}\nu \equiv
\lambda$, Rohlin's Disintegration Theorem
(see~\cite{Simmonsconditional}*{Theorem 6.3}) implies that there is a Borel map $H \longmapsto\nu^H \in \cM(\cosg)$, with $\nu^H$ supported on $G/H=\pi_l^{-1}(H)$, such that Equation \eqref{eq:disintegrate-nu} holds.

The action of $G$ on $\cosg$ leaves invariant all fibers of the map
$\pi_l:\cosg \longrightarrow \subg$.  So since $\nu$ is $G$-invariant,
the fiber measures $\nu^H$ are $G$-invariant for $\lambda$-a.e.\ $H
\in \subg$.  In other words, for $\lambda$-a.e.\ $H$, $\nu^H$ is an
invariant measure on $G/H$, proving Theorem~\ref{stronger}.

\begin {figure}[t]
\includegraphics [width=4.5in,left]{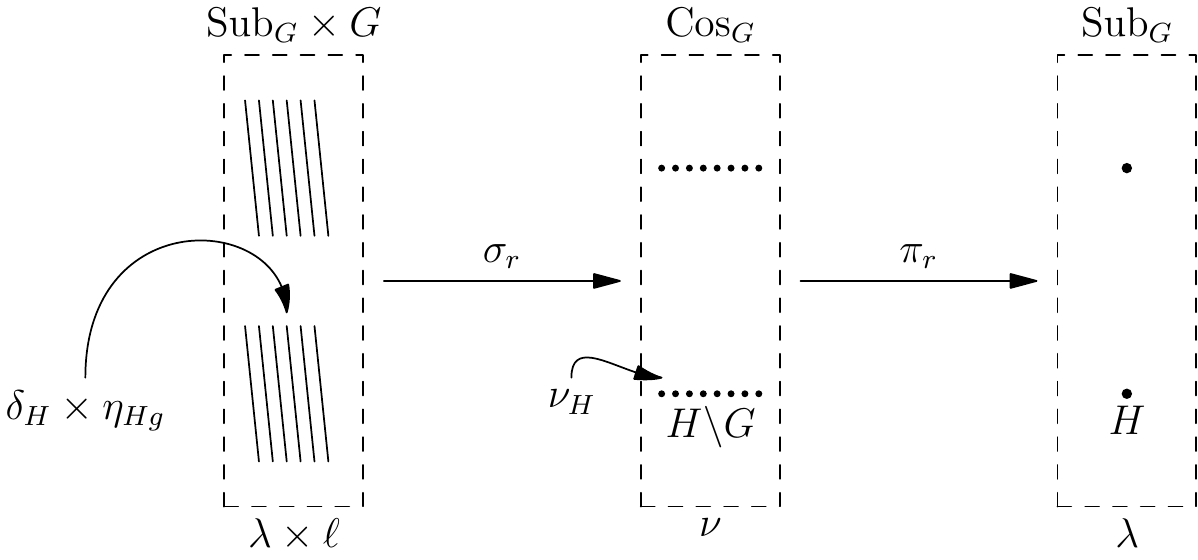}
\caption {The measures involved in the proof of Proposition~\ref{prop:equivalent}.}
\end {figure}

\begin{proof}[Proof of Proposition~\ref{prop:equivalent}]
  Fix a left Haar measure $\ell$ on $G$. Choosing a family $\eta_{Hg}$ of measures as described in Fact \ref{etas}, Proposition
  \ref{prop:constructing-nu} gives a Borel family of
  measures $\nu_H$ on $H \backslash G \subset\cosg$ with
  $$ \ell=\int_{H \backslash G} \eta_{Hg} \, \dd \nu_H(Hg)$$ for each $H\in \subg$, and we set $$\nu = \int \nu_H \, \dd \lambda(H).$$ 
  Note that $\pi_{r*}\nu \equiv \lambda $, since $\nu_H$ is supported on the fiber $\pi_{r*}^{-1}(H).$
The measure $\lambda \times \ell$ on $\subg \times
  G$ can then be expressed as
  \begin {align*}
    \lambda \times \ell&=\int_{\subg} \delta_H \times \left (\int_{H \backslash G} \eta_{Hg} \, \dd \nu_H(Hg) \right)\dd \lambda(H)\\ &= \int_{\cosg} \delta_H \times \eta_{Hg} \, \dd \nu(Hg),
  \end{align*}
  where $\delta_H$ is the Dirac measure at $H\in \subg$. This is a
  disintegration of $\lambda \times \ell$ with respect to $\sigma_r :
  \subg \times G \longrightarrow \cosg$, $\sigma_r(H,g)=Hg$.

  The left $G$-action on $\subg \times G$ is $(H,g) \overset
  {k}{\mapsto} (k H k^{-1},kg)$. Since $\lambda $ is conjugation
  invariant and $\ell$ is left $G$-invariant, $\lambda\times\ell$ is
  left $G$-invariant. But by the equivariance in Fact \ref{etas}, we have that if $k\in G$,
$$k_*(\delta_H \times \eta_{Hg})=\delta_{k H k^{-1}} \times \eta{(k H
  k^{-1})(k g)}$$ so the left $G$-action on $\subg \times G$ permutes
the fiber measures $\delta_H \times \eta_{Hg}$. Therefore, as
$\lambda\times\ell$ is invariant, and the collection of fiber measures
of its disintegration is equivariant, we have that the factor measure
$\nu$ is invariant (c.f.\ Proposition \ref{uniquefactors}).

  It remains to be shown that $\pi_{l*}(\nu)\equiv
  \lambda$. Let
  $$\psi : \subg \times G \to \subg \times G$$ be given by $\psi(H,g) =
  (H^{g^{-1}},g)$. Then
\begin{align*}
    \pi_l \circ \sigma_r = \pi_r \circ \sigma_r \circ \psi,
  \end{align*} since $\pi_l \circ \sigma_r(H,g)=H^{g^{-1}}$ and $\pi_r \circ \sigma_r$ is the
  projection on the first coordinate.
  But $\psi$ preserves $\lambda \times \ell$, since $\lambda$ is
  conjugation invariant, so
  $$
  \pi_{l*}(\nu) \equiv (\pi_l \circ \sigma_r)_*(\lambda \times \ell) = (\pi_r \circ
  \sigma_r)_*(\lambda \times \ell) \equiv \lambda. \qedhere
  $$
\end{proof}

\begin{remark}\label{longremark}
  The measures $\nu^H$ in Theorem~\ref{thm:unimodular} are only determined up to a positive multiple, so the integrated measure 
  \begin{align*}
\nu=\int_{\subg} \nu^H \, \dd \lambda(H)  \end{align*}
is unique up to scaling by a positive Borel function $f : \cosg \longrightarrow \R$ that is constant on each fiber $\pi_l^{-1}(H)=G/H$. 

Sometimes, but not always, it is possible to choose the measure $\nu$ in Proposition~\ref{prop:equivalent} 
to be invariant under both the left and right actions of $G$ on $\cosg$. If $\lambda$ is ergodic, such a $\nu$ is unique up to a global scalar. This is discussed in Section~\ref{sec:bi-invariant}.
\end{remark}

\section{Invariant measures on orbit equivalence relations} 
As in the introduction, suppose that $G \curvearrowright (X,\zeta)$ is
a measure preserving action on a standard Borel probability space and
let
$$E = \{(x,g x)\,:\,x \in X, g \in G\} \subset X \times X$$ 
be the associated orbit equivalence relation, which we consider with
the $G$-action $g(x,y) = (x,g y)$ and the left projection $p_l \colon E
\to X$, $p_l(x,y) = x$.  The standard fact that $E$ is a Borel subset
of $X \times X$ follows (for example) from the Compact Model Theorem
of Varadarajan~\cite{varadarajan1963groups}*{Theorem 3.2}; see for
example~\cite{zimmer1984ergodic}*{Corollary 2.1.20}.

We aim to prove:

\begin{named}{Corollary \ref{cor:actions}}

    Let $G \curvearrowright (X,\zeta)$ be a measure
  preserving action on a $\sigma $-finite Borel measure space, with
  orbit equivalence relation $E$.

  Then there exists a Borel family of $G$-invariant, $\sigma$-finite
  measures $\nu_x$ on $E$, with $\nu_x$ supported on the fiber
  $p_l^{-1}(x)$, and such that
  \begin{align*}
    \nu = \int_X\nu_x\,\dd\zeta(x)
  \end{align*}
  is a $G$-invariant, $\sigma$-finite measure on
  $E$.
\end{named}

To begin the proof, note that $E$ decomposes as a union of the fibers
$p_l^{-1}(x) = \{x\} \times G x$.  The fiber $\{x\} \times G x$ is
invariant under the $G$-action on $E $, and is isomorphic as a
$G$-space to the quotient $G/G_x$, where $$G_x = \{g \in G\,:\, g x=
x\}$$ is the stabilizer of $x$.  Let $\stab \colon X \to \subg $ be
the map that assigns to each $x \in X$ its stabilizer.  As stabilizers
of $G$-actions on separable Borel spaces are
closed~\cite{varadarajan1963groups}, the image of $\stab$ is in
$\subg$. Since $\stab$ is equivariant, $\stab_*\zeta$ is an invariant subgroup measure.
Theorem~\ref{stronger} then gives a Borel map $$\subg \longrightarrow
\mathcal M (\cosg), \ \ H \mapsto \nu^H$$ that associates to $\stab_*
\zeta$-a.e.\ $H \in \subg$ a nonzero $G$-invariant measure on $G/H
\subset \cosg$.  So, for $\zeta$-a.e.\ $x\in X$ we can define $\nu^x$
to be the measure on $\{x\} \times G x \cong G/G_x$ corresponding to
$\nu^{G_x}$.  The map
$$X \longrightarrow \mathcal M(E), \ \ \ x \mapsto \nu^x$$
is then Borel, so we can integrate the measures $\nu^x $ against the
measure $\zeta$ on $X$ to give a $\sigma$-finite Borel measure $\tau$
on $E $:
\begin{align*}
  \tau = \int_X\nu^x \, \dd\zeta(x).
\end{align*}
This $\tau $ is invariant under the $G$-action on the second
coordinate of $E$, since that action preserves the $p_l$-fibers $\{x\}
\times G x $ and the associated measures $\nu^x$.  Therefore,
Corollary~\ref{cor:actions} follows.

\section{The mass transport principle}
\label{sec:mtp}

We start this section in~\ref{sec:labeled} by an additional discussion
of some well known aspects of mass transport principles and their
relation to invariant random subgroups (cf.\
~\cites{Aldousprocesses,abert2012kesten}). We then prove our theorem
in~\ref{sec:mtp-proof}.

\subsection{The MTP for labeled graphs and discrete groups}
\label{sec:labeled}
  
Fix a finite set $S$.  An \emph {$S$-labeled graph} is a countable directed
graph with edges labeled by elements of $S $, such that the edges
coming out from any given vertex $v$ have labels in 1-1 correspondence
with elements of $S $, and the same is true for the labels of edges
coming into $v$. 
Let $\CG^S_{\bullet}$ and
$\CG^S_{\bullet\bullet}$ be the spaces of rooted and doubly rooted $S$-labeled
graphs, up to (label preserving) isomorphism. The topologies on $\CG^S_{\bullet}$ and $\CG^S_{\bullet\bullet}$ are defined so that two (doubly) rooted, $S$-labeled graphs are close when large finite balls around their roots are isomorphic.

A \emph
{unimodular random $S$-labeled graph} is a random $S$-labeled graph
whose law is a probability measure $\lambda $ on $\CG^S_{\bullet}$
such that for every nonnegative Borel function $f :\CG_{\bullet\bullet}^S
\longrightarrow \R$ we have the \emph {mass transport principle}
\begin{align}
  \label{eq:Sgraph-mtp}
  \int_{\CG^S_\bullet}\ \sum_{w\in G}f(G,v,w)\, \dd\lambda(G,v) =
  \int_{\CG^S_\bullet}\ \sum_{w\in G}f(G,w,v)\, \dd\lambda(G,v).
\end{align}

In the introduction, we mentioned that unimodularity is often
equivalent to invariance under a group action, when an action exists.
Here, we can use the $S$-labels to construct an action of the free
group $F(S)$ generated by $S $ on $\CG ^ S_{\bullet} $, where the
action of $s$ moves the root along the adjacent \emph{inward} edge labeled
`s'.  We then have

\begin {fact}\label{thefact}
  A random $S$-labeled graph is unimodular if and only if it is
  invariant under the action of $F(S) $.
\end {fact}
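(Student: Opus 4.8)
The plan is to establish both directions of the equivalence by relating the $F(S)$-action on $\CG^S_\bullet$ to the mass transport principle \eqref{eq:Sgraph-mtp}, exploiting the fact that an $S$-labeled graph carries, at each vertex, a bijection between incident inward edges and elements of $S$ (and likewise for outward edges), so that moving the root along an inward edge labeled $s$ is a well-defined involution-free homeomorphism of $\CG^S_\bullet$ for each $s \in S$. First I would check carefully that this indeed defines an action of the free group $F(S)$: since the inward-edge labeling at every vertex is a bijection with $S$, the generator $s$ acts invertibly (its inverse moves the root backward along the unique inward edge labeled $s$ at the current root's $s$-predecessor, equivalently forward along an outward edge), and because $F(S)$ is free there are no relations to verify. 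I would also note that a single application of a word $w \in F(S)$ to a rooted graph $(G,v)$ produces $(G, w\cdot v)$ for some vertex $w \cdot v$ reachable from $v$, and conversely every vertex of a connected component is of this form, since the inward $S$-edges make the component a quotient of the Cayley graph of $F(S)$.

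The core of the argument is a change-of-variables computation. Given a nonnegative Borel $f$ on $\CG^S_{\bullet\bullet}$, the MTP integrand $\sum_{w} f(G,v,w)$ can be rewritten as a sum over the $F(S)$-orbit: for each group element $u \in F(S)$ the pair $(G, v, u\cdot v)$ is doubly rooted, but distinct $u$ may give the same second root, so I would instead decompose $f$ using indicator functions that record the combinatorial ``route'' and then reduce to the key case $f(G,v,w) = \ind{w = s \cdot v} h(G,w)$ for a fixed generator $s$ and a nonnegative Borel $h$ on $\CG^S_\bullet$ — together with the analogous case $f(G,v,w) = \ind{w = s^{-1}\cdot v} h(G,w)$. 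For such $f$, the left-hand side of \eqref{eq:Sgraph-mtp} is $\int h(G, s\cdot v)\, \dd\lambda(G,v) = \int h \, \dd(s_*\lambda)$, while the right-hand side is $\int h(G,v)\cdot\ind{v = s\cdot(s^{-1}v)}\, \dd\lambda(G,v) = \int h\,\dd\lambda$, where I use that every vertex has a unique inward $s$-edge so the inner sum collapses. Hence the MTP for these test functions is exactly $s_*\lambda = \lambda$ for all $s$, i.e.\ $F(S)$-invariance; and conversely $F(S)$-invariance yields the MTP for this spanning family, and then for all nonnegative Borel $f$ by the standard monotone-class / linearity argument once one observes that functions of the form $\sum_s \ind{w \in \{s\cdot v, s^{-1}\cdot v\}} h_s(G,v,w)$, summed appropriately over reduced words, generate all of them (every $w$ in $G$ is $u \cdot v$ for at least one reduced word $u$, and one can fix a canonical such $u$, e.g.\ a shortest one with ties broken by a Borel rule, to avoid overcounting).

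The main obstacle I anticipate is precisely this bookkeeping in the general reduction: a single target vertex $w$ may be reachable from $v$ by many words in $F(S)$ (the component is generally a proper quotient of the $F(S)$-Cayley graph, so the ``$S$-labeled path from $v$ to $w$'' is not unique), so one cannot naively index the inner sum $\sum_{w} f(G,v,w)$ by $F(S)$. The clean fix is to not try to write the sum as an orbit sum at all, but rather to prove the MTP only for the generating class of test functions $f(G,v,w) = \ind{w = s^{\pm1}\cdot v}\, h(G,v,w)$ — for which the inner sum has exactly one term — and then invoke that these span (under nonnegative linear combinations and monotone limits) all nonnegative Borel functions on $\CG^S_{\bullet\bullet}$, because the map $\CG^S_{\bullet\bullet} \to \CG^S_\bullet \times F(S)$ recording a canonical reduced word fails to be injective only on the behavior off a single edge, and iterating along a path reduces any $f$ to a countable sum of one-step pieces. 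Making the ``canonical reduced word'' choice Borel, and checking it is compatible with the topology of local convergence, is the one spot that needs care; everything else is the elementary computation above.
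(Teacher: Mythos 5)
Your easy direction (MTP $\Rightarrow$ invariance) is correct and is essentially the paper's argument: test the MTP on $f(G,v,w)=\ind{w=s\cdot v}\,h(G,w)$, observe that the inner sums collapse to a single term on each side because the inward and outward edge labels at every vertex are in bijection with $S$, and read off $s_*\lambda=\lambda$. The hard direction, however, has a genuine gap as written. You claim that the one-step test functions $f(G,v,w)=\ind{w=s^{\pm1}\cdot v}\,h(G,v,w)$ ``span, under nonnegative linear combinations and monotone limits, all nonnegative Borel functions on $\CG^S_{\bullet\bullet}$.'' They do not: any such combination or monotone limit is still supported on doubly rooted graphs whose two roots are adjacent, so a transport sending mass to distance $2$ is never reached. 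Likewise, ``iterating along a path reduces any $f$ to a countable sum of one-step pieces'' is false if read literally --- a distance-$n$ transport is not a sum of distance-$1$ transports. What is true (and is the content of the Aldous--Lyons reduction the paper cites) is that the \emph{validity} of the MTP for longer transports follows from its validity for nearest-neighbour ones by telescoping through intermediate vertices; but that argument is not the one you wrote down, so as stated your reduction does not establish the MTP for general $f$.

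The canonical-word device you introduce ``to avoid overcounting'' actually closes the gap by itself, and in a way that bypasses both the one-step reduction and the paper's multiplicity function $m(G,x,s)$. Fix a Borel, isomorphism-invariant selection $c:\CG^S_{\bullet\bullet}\to F(S)$ of a reduced word with $c(G,v,w)\cdot v=w$ (shortest, ties broken by a fixed enumeration of $F(S)$; each condition $u\cdot v=w$ is determined by a finite ball, so $c$ is Borel, and it is everywhere defined since every $w$ in the connected graph $G$ equals $u\cdot v$ for some $u$). Decompose $f=\sum_{u\in F(S)}f_u$ with $f_u=f\cdot\ind{c=u}$. For fixed $u$ the inner sums again have at most one term --- the only candidate for $w$ on the left is $u\cdot v$ and on the right is $u^{-1}\cdot v$ --- so, setting $g(G,v):=f_u(G,v,u\cdot v)$, the MTP for $f_u$ reads
\begin{align*}
\int_{\CG^S_\bullet} g(G,v)\,\dd\lambda(G,v)=\int_{\CG^S_\bullet} g(G,u^{-1}\cdot v)\,\dd\lambda(G,v),
\end{align*}
which is exactly invariance of $\lambda$ under the single element $u\in F(S)$ --- available because you are assuming invariance under all of $F(S)$, not only under the generators. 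Summing over $u$ by monotone convergence gives the MTP for every nonnegative Borel $f$. This is a legitimate alternative to the paper's route, which instead reduces to transports along edges and corrects for parallel edges and bidirectional adjacencies with the multiplicity function; your version trades that bookkeeping for the Borel canonical selection. Rewrite the reduction in this form and delete the spanning claim.
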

\begin {proof}
  Assume first that $\lambda $ is the law of a unimodular random
  $S$-labeled graph.  If $s\in S $ and $E \subset \CG ^ S_\bullet $ is
  Borel, define $f: \CG_{\bullet\bullet}^S \longrightarrow \R$ by $$f
  (G,v,w) = \begin {cases} 1_E (G,v) & (G,v) = s(G,w) \\ 0 & \text
    {otherwise} \end {cases} $$ Then the left side of the MTP is
  $\lambda (E) $, while the right-hand side is $\lambda (s(E)) $.
  Therefore, $\lambda $ is $s$-invariant.

  For the other direction, suppose that $\lambda $ is invariant under the
  action of each $s\in S $.  By a standard reduction, it suffices to
  prove the MTP for functions $f:\CG_{\bullet\bullet}^S
  \longrightarrow \R$ supported on graphs $(G, v, w) $ where there is
  a directed edge from $w $ to $v$ (compare with Proposition 2.2 in
  \cite {Aldousprocesses}).  If $f $ is such a function, the left side
  of the MTP becomes
  \begin {align} \int_{\CG^S_\bullet}\ \sum_{s\in S \cup S^ {-
        1}}\frac {f(G,v,s v)} {m (G, x, s)} \, \dd\lambda(G,v).\label
    {mtpfirst}
  \end {align}
  Here, $S ^ {-1 } $ is the set of formal inverses of elements of $S
  $, where the action of $s^{-1}$ moves a vertex of a graph along the
  adjacent \emph {outward} edge labeled `s'.  The multiplicity function
  $m(G,x,s)$ records the number of elements $t \in S \cup S ^ {- 1} $
  where $s x=t x$.  Then
  \begin{align*}
    \eqref{mtpfirst} &= \sum_{s\in S \cup S^ {- 1}} \int_{\CG^S_\bullet}\ \frac {f(G,v,s v)} {m (G, v, s)} \, \dd\lambda(G,v) \\
    & = \sum_{s\in S \cup S^ {- 1}} \int_{\CG^S_\bullet}\ \frac {f(G,s^{-1}v,v)} {m (G, s^{- 1}v, s)} \, \dd\lambda(G,v) \\
    & = \int_{\CG^S_\bullet} \sum_{s\in S \cup S^ {- 1}} \ \frac {f(G,s v,v)} {m (G, s v, s^{-1})} \, \dd\lambda(G,v) \\
    & = \int_{\CG^S_\bullet} \sum_{w\in G} \ {f(G,w,v)} \,
    \dd\lambda(G,v) ,
  \end {align*}
  which proves the mass transport principle.
\end {proof}

In~\cite{abert2012kesten}, Ab{\'e}rt, Glasner and Vir{\'a}g show how
to produce unimodular random $S$-labeled graphs from invariant
subgroup measures.  Suppose that $G $ is a group generated by the
finite set $S$.  The \emph {Schreier graph} of a subgroup $H \leq G$
is the graph $\mathrm{Sch}(H\backslash G,S) $ whose vertices are right cosets of
$H$ and where each $s \in S$ contributes a directed edge labeled `s'
from every coset $Hg$ to $H g s$.  We consider $\mathrm{Sch}(H\backslash G,S)$ as
an $S$-labeled graph rooted at the identity coset $H$, in which case the action of $F(S)$ described above is $Hg \overset {s}{\longmapsto} H g s^{-1}$. This defines
an injection
$$\Phi : \subg \longrightarrow \CG^ S_\bullet, \ \ H \longmapsto (\mathrm {Sch} (H\backslash G, S), H). $$
Under $\Phi $, the conjugation action of an element $s \in S$ on
$\subg $ corresponds to the natural action on $\CG_\bullet ^ S $,
since 
$$(\mathrm{Sch}(H \backslash G,S),H s^{-1}) \cong (\mathrm{Sch}(s H s^{-1}\backslash G,S), s H s^{-1})$$
 as $S$-labeled rooted
graphs. Therefore, invariant subgroup measures of $G $ produce $F (S)
$-invariant measures on $\CG ^ S_\bullet $.

We can also use the map $\Phi $ to reinterpret the MTP for $S
$-labeled graphs~\eqref{eq:Sgraph-mtp} group theoretically.  Let
$\cosg $ be the space of cosets of subgroups of $G $, as in Section
\ref{proof}. There is then an injection
\begin {align} 
 \Phi':  \cosg  \longrightarrow \CG^S_{\bullet\bullet} , \ \ Hg \longmapsto (\mathrm{Sch}(H \backslash  G,S), H, Hg).
\end {align}
To interpret the right hand side of \eqref {eq:Sgraph-mtp}, note that
the doubly rooted graph $(\mathrm{Sch}(H\backslash G,S),  Hg,H)$ that arises
from switching the order of the roots in $\Phi (H) $ is isomorphic as
an $S$-labeled graph to
$$(\mathrm{Sch}(g^ {- 1} H g \backslash G,S), g ^ {- 1}H g , g ^ {- 1} H g g^ {- 1}) = \Phi'( (g^{-1}Hg)g ^ {- 1})=\Phi'( g^{-1}H).$$  

Therefore, Fact \ref{thefact} translates under $\Phi $ and $\Phi' $
to the following characterization of invariant subgroup measures of a
discrete group $G $.

\begin {named}{The Discrete MTP} Suppose that $G $ is a finitely generated
  group and $\lambda$ is a Borel measure on $\subg $.  Then $\lambda $
  is conjugation invariant if and only if for every nonnegative Borel function $ f:\cosg
  \longrightarrow \R$,
  \begin{align*}
    \int_{\subg}\sum_{H\backslash G}f(Hg)\, \dd\lambda(H) =
    \int_{\subg}\sum_{H\backslash G}f(g^{-1}H)\, \dd\lambda(H).
  \end{align*}
\end {named}

Re-indexing, a slightly more aesthetic statement of the discrete MTP is obtained by replacing the sum on the right with $\sum_{G/H} f(g H)$.

\subsection{Proof of the Mass Transport Theorem}
\label{sec:mtp-proof}  
We now extend the Discrete MTP to general $G$ and discrete or compact
$\lambda$. As the following is intimately related to the existence of an invariant measure on $\cosg$, we will frequently reference the setup of Sections \ref{sec:disintegration} and \ref{proof}. 

Suppose that $G $ is a locally compact, second countable topological
group, and fix a left invariant Haar measure $\ell$ on $G $. If $H$ is a compact subgroup of $G $, let $\nu_H$ be the push forward of $\ell$ to $H\backslash G$.
If $H $ is a discrete subgroup of $G$, let $\nu_H$ be the measure on $H\backslash G$
obtained by \emph{locally} pushing forward $\ell$ under the covering map $G
\longrightarrow H\backslash G$. That is, if $U\subset H\backslash G$ is an evenly covered open set with preimage $V_1 \sqcup V_2 \sqcup \cdots $, then $\nu_H | _U$ is the push forward of $\ell |_{V_i}$ for every $i$.

These $\nu_H$ can be understood in terms of Proposition \ref{prop:constructing-nu}.  For discrete cosets, set $\eta_{Hg} $ to be the counting measure. When $H$ is compact, let $\eta_{Hg} $ be the unique left $H$-invariant probability measure on $Hg$. Then in both cases, $\nu_H$ is characterized by the equation
\begin {align}
  \label{pushforward}
  \ell = \int_{H \backslash G} \eta_{H g} \, \dd\nu _ H.
\end {align}

Note that by Proposition \ref{prop:constructing-nu}, the map $H \longmapsto \nu_H$ from $\subg$ to the space of measures on $\cosg$ is Borel.

\begin{theorem}[Mass Transport Principle]
  \label{thm:mtp}
  Let $\lambda$ be a $\sigma$-finite Borel measure on $\subg$ such
  that $\lambda$-a.e.\ $H\in\subg $ is discrete or compact.
 
Then $\lambda $ is
  conjugation invariant (i.e., an invariant subgroup measure) if and
  only if $\mu_G |_H = 1 $ for $\lambda$-a.e.\ $H\in\subg $, and  for every nonnegative Borel function $f \colon \cosg \rightarrow \R,$
  \begin{align}
    \label{eq:mtp}
    & \ \ \ \int_{\subg} \int_{H\backslash G} f(Hg)\, \mu_G(Hg)\, \dd\nu_H(Hg) \,
    \dd\lambda(H) \nonumber \\
    &=\int_{\subg} \int_{H\backslash G}f(g^{-1}H) \,
    \dd\nu_H(Hg)\, \dd\lambda(H).
  \end{align}
\end{theorem}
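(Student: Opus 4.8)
The strategy is to relate this general statement to Proposition~\ref{prop:equivalent} and the Discrete MTP, where the key new ingredient is that we are working with the \emph{specific} measures $\eta_{Hg}$ (counting measures or Haar probability measures) rather than the Borel-equivariant family of Fact~\ref{etas}, and the modular functions of $G$ and $H$ need not agree. The appearance of $\mu_G(Hg)$ --- which is well defined on $\cosg$ when $\mu_G|_H = 1$, since then $\mu_G$ is constant on right cosets $Hg$ --- is precisely the correction factor that measures the failure of the chosen $\eta_{Hg}$ to be left-$G$-equivariant.

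First I would prove the ``only if'' direction. Assume $\lambda$ is conjugation invariant. By Theorem~\ref{thm:unimodular} (or rather Corollary~\ref{cor:modular}, applied fiberwise), for $\lambda$-a.e.\ $H$ there is a $G$-invariant measure on $G/H$, which by \cite[Theorem 1, pg 139]{nachbin1965} forces $\mu_G|_H = \mu_H$; since $H$ is discrete or compact it is unimodular, so $\mu_H \equiv 1$ and hence $\mu_G|_H = 1$. This justifies that $\mu_G(Hg)$ is well defined $\lambda$-a.e. Next, with $\eta_{Hg}$ the counting (resp.\ Haar probability) measures and $\nu_H$ as in \eqref{pushforward}, form $\nu = \int_{\subg} \nu_H \, \dd\lambda(H)$ on $\cosg$. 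One computes how the left $G$-action moves the $\eta_{Hg}$: for $k \in G$, $k_*\eta_{Hg}$ is again a left-$(kHk^{-1})$-invariant measure supported on $kHg$, hence a scalar multiple of $\eta_{kHg}$, and tracking this scalar via the modular function shows $k_*(\nu)$ is not $\nu$ but differs from it by the density $\mu_G$ read off on $\cosg$. Combining this with the $\psi$-argument from the proof of Proposition~\ref{prop:equivalent} (the involution $Hg \mapsto g^{-1}H$, which exchanges $\nu_H$-type integration with $\nu^H$-type integration and is built from conjugation invariance of $\lambda$ and left-invariance of $\ell$) yields exactly \eqref{eq:mtp}: the factor $\mu_G(Hg)$ on the left side compensates for the non-equivariance of the right-coset measures, and the right side is the pristine image under the involution.

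For the ``if'' direction, suppose $\mu_G|_H = 1$ for $\lambda$-a.e.\ $H$ and \eqref{eq:mtp} holds for all nonnegative Borel $f$. I want to deduce conjugation invariance of $\lambda$. The cleanest route is to run the disintegration machinery in reverse: \eqref{eq:mtp} says precisely that the measure $\tilde\nu := \int_{\subg} \mu_G(Hg)\,\dd\nu_H(Hg)\,\dd\lambda(H)$ on $\cosg$ is invariant under the involution $\iota: Hg \mapsto g^{-1}H$. Using that $\mu_G|_H = 1$ makes $\mu_G(Hg)\,\dd\nu_H$ a left-$G$-invariant rescaling (one checks the modular correction now makes $Hg \mapsto \mu_G(Hg)\eta_{Hg}$ equivariant, so $\tilde\nu$ is genuinely left-$G$-invariant on $\cosg$), and then $\iota$-invariance plus left-$G$-invariance together force $\pi_{r*}\tilde\nu$ to be conjugation invariant; but $\pi_{r*}\tilde\nu \equiv \lambda$ since each $\nu_H$ is nonzero and supported on $\pi_r^{-1}(H) = H\backslash G$, and being conjugation invariant is preserved under passing from a measure to an equivalent one only if we are careful --- so in fact I would instead show directly that $\pi_{r*}\tilde\nu = c\cdot\lambda$ for a positive constant (or at worst a $G$-invariant density, which on an ergodic component is constant), giving conjugation invariance of $\lambda$ outright. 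Reducing to test functions $f$ supported on cosets $Hg$ with $g$ in a fixed small neighborhood of the identity, exactly as in the proof of Fact~\ref{thefact} and the Discrete MTP, makes the bookkeeping manageable.

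\textbf{Main obstacle.} The delicate point is the bookkeeping of the modular function when passing between the non-equivariant family $\eta_{Hg}$ (counting / Haar-probability) used here and the equivariant family of Fact~\ref{etas} used in Section~\ref{proof}: one must verify that the ratio of the two families of coset measures is exactly $\mu_G$ evaluated on $\cosg$, and that this is consistent for both the discrete and compact cases simultaneously. A secondary subtlety is that \eqref{eq:mtp}, unlike the conjugation-invariance hypothesis, does not obviously ``see'' the measure $\lambda$ on its own --- extracting conjugation invariance of $\lambda$ from it requires knowing that $\pi_{r*}$ of the constructed measure on $\cosg$ recovers $\lambda$ up to the right equivalence and that the involution $\iota$ together with the $G$-action generates enough symmetry; handling the $\sigma$-finite (non-ergodic) case here may require a short disintegration-into-ergodic-components argument or an appeal to the uniqueness of factor measures (Proposition~\ref{uniquefactors}).
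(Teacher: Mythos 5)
There is a genuine gap, and it centers on where the factor $\mu_G(Hg)$ actually comes from. You assert that the family $Hg \mapsto \eta_{Hg}$ (counting measures on discrete cosets, left $H$-invariant probability measures on compact ones) fails to be equivariant under the left $G$-action and that $\mu_G$ is the compensating density, so that $k_*(\nu)$ ``differs from $\nu$ by the density $\mu_G$.'' This is false: the pushforward of a counting measure is a counting measure and the pushforward of a probability measure is a probability measure, so $k_*\eta_{Hg} = \eta_{kHg}$ exactly (scalar $1$), the family is left $G$-equivariant, and $\nu = \int \nu_H \, \dd\lambda(H)$ is genuinely left $G$-invariant when $\lambda$ is conjugation invariant --- the paper says so explicitly just before Claim~\ref{mtpclaim}. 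The modular factor instead enters through the \emph{involution}: the relevant map upstairs is $\varphi(H,g)=(H^{g^{-1}},g^{-1})$, which covers $\rho(Hg)=g^{-1}H$ and \emph{inverts} the $G$-coordinate, and inversion scales Haar measure by $1/\mu_G$ (i.e.\ $\iota_*\ell = \tfrac{1}{\mu_G}\ell$). Your proposal never invokes this, and it conflates $\varphi$ with the map $\psi(H,g)=(H^{g^{-1}},g)$ from Proposition~\ref{prop:equivalent}, which does not invert $g$ and preserves $\lambda\times\ell$ with no modular factor at all. Relatedly, your claim in the converse direction that multiplying by $\mu_G(Hg)$ ``makes $Hg \mapsto \mu_G(Hg)\eta_{Hg}$ equivariant'' has it backwards: that multiplication \emph{breaks} equivariance unless $G$ is unimodular, since $\mu_G(kHg)=\mu_G(k)\mu_G(Hg)$.

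The ``if'' direction also lacks a workable mechanism for recovering conjugation invariance of $\lambda$. Passing to $\pi_{r*}\tilde\nu$ loses essentially all information: for noncompact discrete $H$ the total mass $\nu_H(H\backslash G)$ is infinite, so $\pi_{r*}\nu$ is not $c\cdot\lambda$ nor anything one can compare to $\lambda$ beyond mere equivalence, and equivalence does not transport conjugation invariance. The paper's proof avoids this by lifting to $\subg\times G$: it disintegrates $\lambda\times\ell$ over $\sigma_r$ with fiber measures $\delta_H\times\eta_{Hg}$ and factor measure $\nu$, checks that $\varphi$ permutes these fiber measures (here one needs that $\eta_{Hg}$ is also \emph{right} $H^{g^{-1}}$-invariant, which holds for counting measures and for bi-invariant Haar probability measures on compact groups --- this is the real reason the discrete/compact hypothesis is needed, a point your proposal never isolates), and then uses uniqueness of factor measures (Proposition~\ref{uniquefactors}) to conclude that $\rho_*\nu = \tfrac{1}{\mu_G}\nu$ holds if and only if $\varphi_*(\lambda\times\ell)=\tfrac{1}{\mu_G}(\lambda\times\ell)$, which in turn is equivalent to conjugation invariance of $\lambda$ because $\iota_*\ell=\tfrac{1}{\mu_G}\ell$. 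Both directions fall out of this single chain of equivalences; to repair your argument you would need to replace the $\pi_{r*}$ step with this disintegration-and-uniqueness argument, and redo the bookkeeping of $\mu_G$ via inversion of $\ell$ rather than via a (nonexistent) failure of left equivariance.
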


Here, $\mu_G$ is the modular function of $G $, defined by the equation $$ \ell(S)=\mu_G(g) \ell(S g), \ \ \text {for every Borel } S\subset G. $$
The assumption $\mu_G |_H = 1 $ implies that $\mu_G$ is constant over each coset $Hg$, and we write $\mu_G(Hg)$ for the common value on that coset. Note that the modular function of a discrete or compact group is trivial, so Theorem \ref{thm:unimodular} implies that if $\lambda $ is a discrete or compact invariant subgroup measure, then $\mu_G |_H=\mu_H = 1 $ automatically for $\lambda$-a.e.\ $H$. So, in the course of the proof below, we will always assume $\mu_G |_H=\mu_H = 1 $.

The MTP can be stated in a way that is more similar to our work earlier in the paper. Define an involution
$$\rho \colon
  \cosg \longrightarrow \cosg, \ \ \rho(Hg) = g^{-1}H,$$
and let $\nu$ be the measure on $\cosg$ defined by the integral
$$\nu := \int_{\subg} \nu_H \, \dd \lambda(H).$$
Note that the map $Hg \longmapsto \eta_{Hg}$ defined above Equation
\eqref{pushforward} is left $G$-equivariant, so just as in the proof
of Proposition \ref{prop:equivalent} the measure $\nu$ is left
$G$-invariant. 
Now, changing variables on the right hand side, Equation \eqref{eq:mtp} can be rewritten as:
\begin{align}
  \label{eq:mtp2}
  \int_{\cosg } f(Hg) \mu(Hg)\, \dd\nu(Hg) = \int_{\cosg} f(Hg) \, \dd\rho_*\nu(Hg).
\end{align}

So, the MTP  reduces to the following claim.

\begin {claim}\label{mtpclaim}
 $\lambda $ is conjugation invariant if and only if $ \frac 1 {\mu_G}\cdot \nu = \rho_*\nu.$
\end {claim}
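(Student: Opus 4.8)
The plan is to prove Claim~\ref{mtpclaim} by recognizing $\rho_*\nu$ as arising from a disintegration of a scalar multiple of $\lambda \times \ell$ and comparing it to the disintegration that produces $\nu$ itself, in the spirit of the proof of Proposition~\ref{prop:equivalent}. Concretely, recall that $\nu = \int_{\subg}\nu_H\,\dd\lambda(H)$ where $\ell = \int_{H\backslash G}\eta_{Hg}\,\dd\nu_H(Hg)$, so that
\[
  \lambda \times \ell = \int_{\cosg}\delta_H \times \eta_{Hg}\,\dd\nu(Hg)
\]
is a disintegration over $\sigma_r(H,g) = Hg$, exactly as in Proposition~\ref{prop:equivalent}. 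Since $\rho \circ \sigma_r = \sigma_l \circ \psi$ where $\psi(H,g) = (H^{g^{-1}},g)$ and $\sigma_l(H,g) = gH$, I would push the above disintegration forward under $\psi$ and reorganize it as a disintegration over $\sigma_l$; the factor measure of that disintegration is then $\rho_*$ of something, and the point is to compute exactly which measure, tracking the modular function that enters because $\psi$ does \emph{not} preserve $\lambda \times \ell$ in general — it preserves it precisely when $\lambda$ is conjugation invariant, and more generally distorts it by a factor governed by $\mu_G$.

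First I would establish the forward direction: assuming $\lambda$ is conjugation invariant, $\psi$ preserves $\lambda \times \ell$ (this is the computation already used at the end of the proof of Proposition~\ref{prop:equivalent}, combined with left-invariance of $\ell$), so applying $\psi$ to the disintegration above gives a disintegration of $\lambda\times\ell$ over $\sigma_l$ whose factor measure is $\rho_*\nu$. On the other hand, I can disintegrate $\lambda\times\ell$ over $\sigma_l$ directly: using $gH = g(g^{-1}Hg)$ and the equivariance/structure of the $\eta$'s, the right coset measures $\eta_{Hg}$ on $Hg$ correspond under $Hg \mapsto g^{-1}H$ to left coset measures on $g^{-1}H$, but the \emph{Haar} measure $\ell$ is only left-invariant, so pushing $\ell$ to the space of left cosets via $g \mapsto gH$ relates to pushing it via $g\mapsto Hg$ by exactly the modular function $\mu_G$ — this is where the factor $1/\mu_G$ appears. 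Comparing the two disintegrations via the uniqueness of factor measures (Proposition~\ref{uniquefactors}) yields $\tfrac{1}{\mu_G}\cdot\nu = \rho_*\nu$, after using that $\mu_G$ is constant on each fiber of $\pi_r$ (which holds since $\mu_G|_H = 1$, as noted just before the claim).

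For the converse, I would reverse this reasoning: given $\tfrac1{\mu_G}\nu = \rho_*\nu$, the two disintegrations of $\lambda\times\ell$ and of $\psi_*(\lambda\times\ell)$ over $\sigma_l$ are forced to agree (both have the same factor measure $\rho_*\nu$ up to the $\mu_G$ twist, and the same equivariant family of fiber measures), hence $\psi_*(\lambda\times\ell) = \lambda\times\ell$; projecting onto the $\subg$-coordinate and using that $\psi$ acts there as $H \mapsto H^{g^{-1}}$ while integrating out $g$ against Haar measure, one recovers that $\lambda$ is invariant under conjugation by $\ell$-almost every $g$, and then by continuity/second countability under conjugation by all of $G$. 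The main obstacle I anticipate is bookkeeping the modular function correctly: one has to be careful that $\ell$ pushed forward to $H\backslash G$ versus the "local pushforward," and the interplay between left cosets and right cosets under $\rho$, introduce $\mu_G$ with the right exponent and on the right space, and that $\mu_G$ descends to a well-defined function on $\cosg$ only because $\mu_G|_H = 1$. A secondary technical point is ensuring the disintegration uniqueness argument applies in the $\sigma$-finite (not just probability) setting — but Rohlin's theorem in the form cited (\cite{Simmonsconditional}*{Theorem 6.3}) and Proposition~\ref{uniquefactors} are already set up for exactly this.
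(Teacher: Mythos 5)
Your overall strategy---push the canonical disintegration $\lambda\times\ell=\int_{\cosg}\delta_H\times\eta_{Hg}\,\dd\nu(Hg)$ forward under a map on $\subg\times G$ covering $\rho$, locate the modular function, and compare factor measures via uniqueness of disintegrations---is exactly the paper's strategy. But the map you chose does not cover $\rho$. With the paper's convention $H^k=kHk^{-1}$ one has $\psi(H,g)=(H^{g^{-1}},g)=(g^{-1}Hg,\,g)$, so
$$\sigma_l(\psi(H,g))=g\,(g^{-1}Hg)=Hg=\sigma_r(H,g),$$
i.e.\ $\sigma_l\circ\psi=\sigma_r$ covers the \emph{identity} on $\cosg$, not $\rho$. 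Since $\rho(Hg)=g^{-1}H$ genuinely inverts the group element, any lift of it must invert the $G$-coordinate: the paper uses $\varphi(H,g)=(H^{g^{-1}},g^{-1})$, which satisfies $\sigma_r\circ\varphi=\rho\circ\sigma_r$ (alternatively $(H,g)\mapsto(H,g^{-1})$ works with $\sigma_l$ downstairs). This is not a cosmetic slip, because the inversion is precisely where $1/\mu_G$ comes from, via $\iota_*\ell=\tfrac{1}{\mu_G}\,\ell$ for $\iota(g)=g^{-1}$. Your account attributes the twist to two other mechanisms, neither of which works: (i) you say $\psi$ distorts $\lambda\times\ell$ ``by a factor governed by $\mu_G$'' when $\lambda$ is not invariant---but $\psi$ fixes the $G$-coordinate, so its effect on $\lambda\times\ell$ is governed solely by the failure of conjugation-invariance of $\lambda$ and involves $\mu_G$ not at all; and (ii) in the forward direction you have $\psi$ preserving $\lambda\times\ell$ while permuting the fiber measures, which would force $\rho_*\nu=\nu$ with no twist, contradicting the statement whenever $\mu_G\neq 1$ on the support of $\nu$.

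The repair is to replace $\psi$ by $\varphi(H,g)=(H^{g^{-1}},g^{-1})$. Then $\lambda$ is conjugation invariant if and only if $\varphi_*(\lambda\times\ell)=\tfrac{1}{\mu_G}\cdot(\lambda\times\ell)$ (combine conjugation invariance in the first factor with $\iota_*\ell=\tfrac{1}{\mu_G}\ell$ in the second), and one must check that $\varphi$ permutes the fiber measures: $\varphi_*(\delta_H\times\eta_{Hg})=\delta_{H^{g^{-1}}}\times\eta_{H^{g^{-1}}g^{-1}}$. This last verification is absent from your sketch and is the one place the hypothesis that $\lambda$-a.e.\ $H$ is discrete or compact enters: you need $\eta_{Hg}$ to be invariant under the \emph{right} action of $H^{g^{-1}}$ on $Hg=gH^{g^{-1}}$ (clear for counting measures, and for compact $H$ because $\eta_{Hg}$ is the left translate by $g$ of the bi-invariant Haar probability measure on $H^{g^{-1}}$), so that its push-forward under inversion is again a member of the family $\{\eta\}$. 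Given these two facts, uniqueness of factor measures (Proposition~\ref{uniquefactors}) yields both implications of the claim at once. Your converse also quietly uses the passage from invariance under conjugation by $\ell$-a.e.\ $g$ to invariance under all of $G$; that step is standard but should be stated.
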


\begin {proof}[Proof of Claim \ref{mtpclaim}]
Recall the following maps defined in Section \ref{sec:disintegration}:
  \begin{equation*}
    \begin{array}{r c l  c l}
      \subg \times G& \overset{\sigma_r}{\longrightarrow} &\cosg & \overset {\pi_r}{\longrightarrow} & \subg,\\
     (H,g)&\longmapsto     &Hg &\longmapsto     & H.
   \end{array}
 \end{equation*}
As in the proof of Proposition \ref{prop:equivalent}, we have a $\sigma_r$-disintegration
\begin {equation}\label{mtpdis}
\lambda \times \ell= \int_{\cosg} \delta_H \times \eta_{Hg} \ \dd \nu(Hg),
\end {equation}
where $\ell$ is our chosen left Haar measure on $G$.  Since $\rho(Hg) = g ^ {- 1}H  = H^{g^{-1}} g^{-1}$, there is a
  commutative diagram
  $$\xymatrix {   \subg \times G \ar[r]^{\varphi} \ar[d]^{\sigma_r} &  \subg\times G \ar[d]^{\sigma_r} \\ \cosg \ar[r]^{\rho} &\cosg},$$
  where $\varphi $ is defined by $ \varphi( H,g) =
  (H^{g^{-1}},g^{-1})$. Note that if
$$\iota: G \longrightarrow G, \ \ \iota(g)=g^{-1},$$
then $\iota_*(\ell)=\frac 1{\mu_G} \cdot \ell$, so as $\varphi$ inverts the second factor in $\subg \times G$ and conjugates the first, \emph{$\lambda $ is conjugation invariant if and only if} 
\begin {equation}\label{vareq}
\varphi_*(\lambda \times \ell)(H,g)=\frac 1{\mu_G(g)}\, \lambda \times \ell(H,g).
\end {equation}

For each $Hg\in \cosg$, the measure $\eta_{Hg}$ is either the counting
measure or the unique $H$-invariant probability measure on $Hg$. In
addition to being left $H$-invariant, in both cases $\eta_{Hg}$ is in
fact invariant under the right action of $H^{g^{-1}}$ on $Hg$. This is
immediate when it is a counting measure, and when $H$ is compact,
$\eta_{Hg}$ is the push forward under left multiplication by $g$ of
the \emph{bi-invariant} Haar probability measure on the compact group
$H^{g^{-1}}$.  So, the pushforward of $\eta_{Hg}$ under inversion $g
\longmapsto g^{-1}$ is a \emph{left} $H^{g^{-1}}$-invariant measure on
$H^{g^{-1}}g^{-1}$, and must be $\eta_{H^{g^{-1}}g^{-1}}$. Multiplying
by $\delta_H$, we then have
  \begin{align*}
    \varphi_*(\delta_H \times \eta_{Hg} ) = \delta_{H ^ {g^{-1}}} \times\eta_{H^{g^{-1}} g^{-1}}. 
  \end{align*}
So, the fiber measures of the disintegration in \eqref{mtpdis} are permuted by $\varphi$. From the commutative diagram for $\varphi$ and $\rho$, it follows that $\rho$ scales $\nu$ by $1/\mu_G$ if and only if $\varphi$ scales $\lambda\times\ell$ by $1/\mu_G$, which we saw above was equivalent to conjugation invariance of $\lambda $.
\end{proof}

\subsection{Bi-invariant measures and co-unimodular IRSs}
\label{sec:bi-invariant}
As a consequence of Claim \ref{mtpclaim}, we record the following.

\begin {corollary}\label{bi-invariant}
Suppose that $\lambda $ is an invariant subgroup measure in a unimodular group $G $ such that $\lambda $-a.e.\ $H\in\subg$ is discrete or cocompact. Then there is a measure $\nu$ on $\cosg$ that is invariant under both the left and right actions of $G $, and for which $\pi_{l*}(\nu)\equiv\pi_{r*}(\nu)\equiv\lambda $. Moreover, if $\lambda $ is ergodic then $\nu$ is unique up to scale.
\end{corollary}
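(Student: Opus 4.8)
The plan is to apply Claim \ref{mtpclaim} in both directions and combine it with the left-invariance of the natural integrated measure. Starting from the invariant subgroup measure $\lambda$, which is supported on discrete or cocompact subgroups, I would first set up the measures $\eta_{Hg}$ exactly as in Section \ref{sec:mtp-proof}: counting measure on discrete cosets, and the normalized $H$-invariant probability measure on cosets of compact subgroups. Here one must be slightly careful, since ``cocompact'' is not the same as ``compact'' — but a cocompact $H$ in a unimodular $G$ is unimodular, so $G/H$ carries a finite $G$-invariant measure; for such $H$ one takes $\nu_H$ to be that finite invariant measure (equivalently, uses Proposition \ref{prop:constructing-nu} with $\eta_{Hg}$ the appropriate $H$-invariant measure on the coset). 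With these choices, form $\nu := \int_{\subg}\nu_H\,\dd\lambda(H)$. As noted just before Claim \ref{mtpclaim}, the map $Hg\mapsto\eta_{Hg}$ is left $G$-equivariant, so the argument from the proof of Proposition \ref{prop:equivalent} shows $\nu$ is left $G$-invariant, and by construction $\pi_{r*}\nu\equiv\lambda$; that $\pi_{l*}\nu\equiv\lambda$ follows from Proposition \ref{prop:equivalent} (or directly from left-invariance together with the change of variables in the proof below).

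The key input is that $G$ is unimodular, so $\mu_G\equiv 1$. Then Claim \ref{mtpclaim} says precisely that $\lambda$ conjugation invariant is equivalent to $\nu = \rho_*\nu$, where $\rho(Hg)=g^{-1}H$ is the involution of $\cosg$. Since we are assuming $\lambda$ is an invariant subgroup measure, we conclude $\rho_*\nu=\nu$. Now I claim this $\rho$-invariance, together with left $G$-invariance, gives right $G$-invariance. Indeed, in the right-coset picture the right action of $k\in G$ sends $Hg\mapsto Hgk$; one checks that $\rho$ intertwines the left action and (the inverse of) the right action, namely $\rho(k\cdot Hg) = \rho(H^k kg) = (kg)^{-1}H^k = g^{-1}k^{-1}kHk^{-1} = g^{-1}Hk^{-1} = \rho(Hg)\cdot k^{-1}$. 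Hence for the right action of $k$ we have $(\text{right-}k)_*\nu = (\text{right-}k)_*\rho_*\nu = \rho_*(\text{left-}k^{-1})_*\nu = \rho_*\nu = \nu$, using left-invariance of $\nu$ in the middle step. So $\nu$ is bi-invariant, and it projects to $\lambda$ under both $\pi_l$ and $\pi_r$.

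For the uniqueness claim, suppose $\lambda$ is ergodic and $\nu'$ is another bi-invariant measure on $\cosg$ with $\pi_{l*}\nu'\equiv\pi_{r*}\nu'\equiv\lambda$. Disintegrate $\nu'$ over $\pi_l$ (using Rohlin, since $\pi_{l*}\nu'\equiv\lambda$) to get $\nu'=\int\nu'^H\,\dd\lambda(H)$ with $\nu'^H$ a $G$-invariant measure on $G/H$; similarly $\nu=\int\nu^H\,\dd\lambda(H)$. For $\lambda$-a.e.\ $H$ that is discrete or cocompact, the space of $G$-invariant measures on $G/H$ is one-dimensional — this is standard for lattices/cocompact subgroups and for discrete subgroups the locally-pushed-forward Haar measure is the unique invariant one up to scale — so $\nu'^H = c(H)\,\nu^H$ for a Borel function $c:\subg\to\R^+$. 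Then $\nu' = c\cdot\nu$ as measures on $\cosg$ with $c$ constant on $\pi_l$-fibers; the condition that $\nu'$ is left $G$-invariant forces $c$ to be $G$-invariant, hence (as $\pi_l$ is $G$-equivariant and $\lambda$ ergodic) $c$ is $\lambda$-a.e.\ constant, giving $\nu'=c\,\nu$ globally. The main obstacle I anticipate is verifying cleanly the one-dimensionality of the invariant-measure space on $G/H$ uniformly in $H$ — in particular handling the cocompact (non-compact) case within the framework of Proposition \ref{prop:constructing-nu} — and making sure the Borel dependence of the proportionality function $c(H)$ is justified so that ergodicity can be applied.
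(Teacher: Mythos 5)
Your existence argument is essentially the paper's: form $\nu=\int_{\subg}\nu_H\,\dd\lambda(H)$, note it is left $G$-invariant with $\pi_{r*}\nu\equiv\lambda$ and $\pi_{l*}\nu\equiv\lambda$, use Claim \ref{mtpclaim} with $\mu_G\equiv 1$ to get $\rho_*\nu=\nu$, and use the intertwining $\rho(k\cdot Hg)=\rho(Hg)\cdot k^{-1}$ to convert $\rho$-invariance plus left-invariance into right-invariance. That half is correct. The uniqueness half, however, breaks at exactly the step where its content lies. You write that ``the condition that $\nu'$ is left $G$-invariant forces $c$ to be $G$-invariant,'' and then apply ergodicity of $\lambda$ under conjugation. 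But the left action preserves every fiber $\pi_l^{-1}(H)=G/H$ --- through $\pi_l$ it covers the \emph{trivial} action on $\subg$, not the conjugation action --- so left-invariance of $\nu'$ only re-expresses what you already used, namely that each $(\nu')^H$ is an invariant measure on $G/H$. It imposes no relation between $c(H)$ and $c(H^k)$; indeed, Remark \ref{longremark} notes that a merely left-invariant $\nu$ over $\lambda$ is only determined up to rescaling by an arbitrary positive Borel function of $H$, so if your deduction were valid, uniqueness would hold without bi-invariance, which is false. The missing ingredient is the \emph{right} invariance you are assuming: composing a left translation by $k$ with a right translation by $k^{-1}$ gives a map of $\cosg$ that preserves $\nu$ and $\nu'$ and carries the fiber $G/H$ onto the fiber $G/H^k$; this is what transports $(\nu)^H$ to $(\nu)^{H^k}$ and shows $c$ is conjugation-invariant, after which ergodicity applies. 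This is how the paper argues, and the repair is available to you, but as written the key step is unjustified.

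A secondary problem is your treatment of the cocompact case. The assertion that a cocompact closed subgroup of a unimodular group is unimodular is false: the upper-triangular subgroup $B\leq SL_2(\R)$ is cocompact and non-unimodular, and $SL_2(\R)/B$ carries no invariant measure. The correct reason that $\lambda$-a.e.\ $H$ is unimodular is Theorem \ref{thm:unimodular}, which forces $\mu_H=\mu_G|_H\equiv 1$ on the support of $\lambda$. Even granting that, Claim \ref{mtpclaim} is proved only for discrete or compact $H$ --- its proof uses that $\eta_{Hg}$ is a counting or probability measure, normalizations that transform correctly under inversion and conjugation --- so invoking it for cocompact non-compact $H$ would require a conjugation-equivariant normalization of the Haar measures $m(H)$, which you do not supply. (The paper's own proof and its later summary, ``discrete IRSs are co-unimodular, as are compact ones,'' indicate that ``cocompact'' in the statement should be read as ``compact''; your instinct to flag the mismatch was sound, but the proposed patch does not work.)
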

\begin {proof}
The construction above gives a measure $\nu$ on $\cosg$ that is left $G $-invariant and also $\rho$-invariant. But the map $\rho(Hg) = g^{-1}H$ conjugates the left $G $-action on $\cosg$ to the right $G$-action, so $\nu$ is also right $G $-invariant. The fact that $\pi_{r*}(\nu)\equiv\lambda $ is just the definition of $\nu$, and the fact that $\pi_{l*}(\nu)\equiv\lambda $ is the argument at the end of Proposition \ref{prop:equivalent}.

Since $\nu$ is bi-invariant, it is preserved under conjugation by $k\in G$, i.e.\ by the map $$\cosg \longrightarrow \cosg, \ \ g H \overset {k}{\longrightarrow } k g H k=k g k^{-1}(k H k^{-1}).$$ This map sends $G/H$ to $G/(k H k^{-1})$, so permutes the fibers of $\pi_l$.  As it also preserves $\nu$ and its $\pi_l$-factor measure $\lambda $, the conjugation map must permute the fiber measures $\nu^H$. Therefore, if $\nu$ and $\nu'$ are both bi-invariant and $\lambda $ is ergodic, the function
$$H \longrightarrow \frac {\dd \nu^H}{\dd (\nu')^H}$$
is conjugation invariant, so is constant on a $\lambda $-full measure set. This implies that $\nu$ and $\nu'$ agree up to a scalar multiple, as desired.
\end {proof}

Bi-invariant $\nu$ do not always exist for general invariant subgroup measures $\lambda $, even in unimodular groups $G$.  For instance, if $\lambda $ is an atomic measure on a normal subgroup $N\lhd G$, then $\nu=\nu_{N}$ is just the left Haar measure on $G/N\subset\cosg$, which is right invariant exactly when $G/N$ is unimodular. Note that unimodular $G$ may have non-unimodular quotients $G/N$: an example is
$$\mathrm{Sol}=\R^2 \rtimes \R, \ \ t\in \R \longmapsto \begin {pmatrix} e^t & 0 \\ 0 & e^{-t} \end{pmatrix} \circlearrowright \R^2,$$
where $N$ is the $x$-axis in $\R^2 \subset \mathrm{Sol}$. Although $\mathrm{Sol}$ is unimodular, the quotient $\mathrm{Sol}/N$ is the group of affine transformations $$\mathrm{Aff}(\R)=\{x\mapsto ax+b \ | \ a,b\in \R\},$$ which is not unimodular. 

As suggested in the introduction, it is natural to call an IRS (or
more generally, an invariant subgroup measure) $\lambda$ co-unimodular
if there exists a bi-invariant $\nu$ such that
$\pi_{l*}(\nu)\equiv\pi_{r*}(\nu)\equiv\lambda $. Using this
terminology, Corollary~\ref{bi-invariant} states that inside unimodular $G$, discrete IRSs
are co-unimodular, as are compact ones. In general, co-unimodularity is equivalent to obeying a mass transport principle that is not twisted by the modular function.

\begin{claim}
  \label{clm:co-unimodular}
  Let $\lambda $ be an invariant subgroup measure in a locally compact
  second countable group $G$. Then the following are equivalent.
  \begin{enumerate}
  \item There exists a $G$-left-invariant measure $\nu$ on $\cosg$
    such that $\pi_{l*}(\nu)\equiv\pi_{r*}(\nu)\equiv\lambda $ and
    such that $\rho_*\nu = \nu$ (i.e.\ an untwisted MTP holds for $\nu$;
    see~\eqref{eq:mtp2} without the $\mu_G$ factor).
  \item There exists a $G$-bi-invariant measure $\nu$ on $\cosg$ such
    that $\pi_{l*}(\nu)\equiv\pi_{r*}(\nu)\equiv\lambda $ (that is,
    $\lambda$ is co-unimodular).
  \end{enumerate}
\end{claim}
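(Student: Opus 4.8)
The plan is to reduce both implications to two structural facts about the involution $\rho$, both of which are essentially already in the excerpt. First, $\rho$ is a homeomorphic involution: writing a coset as $Hg$ one has $\rho(Hg)=g^{-1}H=(Hg)^{-1}$, so $\rho$ is just inversion of subsets and $\rho\circ\rho=\mathrm{id}$. Second, $\rho$ interchanges the left- and right-translation actions of $G$ on $\cosg$: from $(kC)^{-1}=C^{-1}k^{-1}$ one gets $\rho\circ L_k=R_k\circ\rho$ for all $k\in G$, where $L_k,R_k$ denote left and right translation; this is precisely the conjugation relation used in the proof of Corollary~\ref{bi-invariant}. I will also use $\pi_l\circ\rho=\pi_r$ and $\pi_r\circ\rho=\pi_l$, which are immediate from $\rho(Hg)=g^{-1}H$ and $\pi_l(Hg)=g^{-1}Hg$. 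Crucially, and unlike in Theorem~\ref{thm:mtp}, no modular function enters here, because $\nu$ is an abstract measure on $\cosg$ rather than a disintegration of a Haar measure on $G$; this is exactly why condition (1) is an \emph{untwisted} mass transport principle.

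For $(1)\Rightarrow(2)$ I would show that the measure $\nu$ in (1) is itself bi-invariant, and hence already witnesses (2) (the projection conditions are common to both). Indeed $\nu$ is left-invariant by hypothesis, and, using $\rho_*\nu=\nu$ and then left-invariance of $\nu$,
\[
R_{k*}\nu=R_{k*}\rho_*\nu=(R_k\circ\rho)_*\nu=(\rho\circ L_k)_*\nu=\rho_*(L_{k*}\nu)=\rho_*\nu=\nu,
\]
so $\nu$ is right-invariant as well.

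For $(2)\Rightarrow(1)$ I would, given a bi-invariant $\nu$ with $\pi_{l*}\nu\equiv\pi_{r*}\nu\equiv\lambda$, set $\nu':=\nu+\rho_*\nu$ and check three things. First, $\rho_*\nu'=\rho_*\nu+\rho_*\rho_*\nu=\rho_*\nu+\nu=\nu'$, since $\rho\circ\rho=\mathrm{id}$. Second, $\rho_*\nu$ is left-invariant, because $L_{k*}\rho_*\nu=(L_k\circ\rho)_*\nu=(\rho\circ R_k)_*\nu=\rho_*(R_{k*}\nu)=\rho_*\nu$ by right-invariance of $\nu$ (here $L_k\circ\rho=\rho\circ R_k$ is obtained from the interchange relation by conjugating with $\rho$); hence $\nu'$ is left-invariant. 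Third, $\pi_{l*}\nu'=\pi_{l*}\nu+\pi_{l*}\rho_*\nu=\pi_{l*}\nu+\pi_{r*}\nu$ is a sum of two $\sigma$-finite measures each equivalent to $\lambda$, hence $\sigma$-finite and equivalent to $\lambda$; likewise $\pi_{r*}\nu'\equiv\lambda$. As $\nu\neq 0$ we have $\nu'\geq\nu\neq 0$, so $\nu'$ satisfies (1). (In fact $\nu'$ is also bi-invariant and $\rho$-invariant, so one may equally read $(2)\Rightarrow(1)$ as saying that co-unimodularity forces the stronger $\rho$-symmetric normalization to exist too.)

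The arguments are short, and I expect the only real point of care to be the bookkeeping: getting the interchange relation $\rho\circ L_k=R_k\circ\rho$ exactly right, together with its consequence $L_k\circ\rho=\rho\circ R_k$, and correctly tracking which of $\pi_l,\pi_r$ one lands in after applying $\rho$. With involutivity $\rho\circ\rho=\mathrm{id}$ and this interchange relation in hand, both directions are purely formal manipulations of pushforward measures.
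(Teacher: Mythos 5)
Your proof is correct and follows essentially the same route as the paper: for $(1)\Rightarrow(2)$ the paper likewise deduces right-invariance from $\rho_*\nu=\nu$ via the fact that $\rho$ conjugates the left action to the right action, and for $(2)\Rightarrow(1)$ it likewise symmetrizes by passing to $\nu+\rho_*\nu$. Your version just spells out the bookkeeping ($\rho^2=\mathrm{id}$, $\rho\circ L_k=R_k\circ\rho$, $\pi_l\circ\rho=\pi_r$) that the paper leaves implicit.
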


\begin{proof}
  Fix a left-invariant measure $\nu$ such that
  $\pi_{l*}(\nu)\equiv\pi_{r*}(\nu)\equiv\lambda $. If $\rho_*\nu=\nu$
  then $\nu$ is bi-invariant, following the argument of
  Corollary~\ref{bi-invariant}. Conversely, if $\nu$ is bi-invariant,
  then $\nu + \rho_*\nu$ has the same properties of $\nu$, but is also
  $\rho$-invariant.
\end{proof}

\appendix

\section{Some notes on Haar measures}

Suppose that $G $ is a locally compact, second countable group and fix
throughout this section a continuous, nonnegative function $f : G \to
\R$ with compact support such that $f(1)=1$.

\begin {definition}
  If $H $ is a closed subgroup of $G $, we define $m_f(H)$ to be the
  unique left Haar measure on $H$ such that $\int f \, \dd m_f (H) = 1$.
\end {definition}

The measure $m_f(H)$ will exist and be unique as long as we have that
the integral of $f $ against any left Haar measure $\mu $ is nonzero
and finite. The former follows since $\mu$ is positive on open subsets
of $H$ and $f$ is positive on a neighborhood of the identity.  The
latter holds since $\mu$ is Radon and $\supp{f|_H}$ is compact.

\begin{claim}\label{haarmeasures}
  The map $m_f : \subg \to \cM(G)$ is continuous, where $\subg$ has
  the Chabauty topology and $\cM(G)$ the topology of weak convergence.
\end{claim}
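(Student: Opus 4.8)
The plan is to establish sequential continuity, which suffices because $\subg$ with the Chabauty topology is second countable, hence metrizable. (Throughout, weak convergence of measures means $\int\varphi\,\dd\mu_n\to\int\varphi\,\dd\mu$ for every $\varphi\in C_c(G)$, the natural topology here since the measures $m_f(H)$ are typically infinite.) So fix a Chabauty-convergent sequence $H_n\to H$ in $\subg$; the goal is to show $m_f(H_n)\to m_f(H)$ weakly.

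The first step is a uniform local boundedness estimate: for every compact $K\subset G$ there is a constant $M_K$ with $m_f(H')(K)\le M_K$ for \emph{all} $H'\in\subg$ simultaneously. Since $f$ is continuous with $f(1)=1$, there is an open neighbourhood $V$ of $1$ on which $f\ge 1/2$; then $\int f\,\dd m_f(H')=1$ forces $m_f(H')(V\cap H')\le 2$ for every $H'$. Fix a Haar measure $\ell$ on $G$ and a symmetric open neighbourhood $W$ of $1$ with $\overline W$ compact and $WW\subset V$. Given $K$, let $\{x_1,\dots,x_m\}\subset K\cap H'$ be maximal subject to the translates $x_iW$ being pairwise disjoint; these translates lie in $K\overline W$, so $m\,\ell(W)\le\ell(K\overline W)$ and hence $m\le \ell(K\overline W)/\ell(W)=:M_K$ independently of $H'$, while maximality forces $K\cap H'\subset\bigcup_i x_i(V\cap H')$, whence by left $H'$-invariance $m_f(H')(K)\le 2m\le 2M_K$.

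With this bound, $(m_f(H_n))_n$ is relatively weakly compact, and since $G$ is second countable every subsequence has a further subsequence converging weakly to some Radon measure $\mu$; it remains to show that every such limit $\mu$ equals $m_f(H)$. Three standard properties of Chabauty convergence do this. First, if $C\subset G$ is compact and disjoint from $H$ then $C\cap H_n=\emptyset$ for large $n$; since each $m_f(H_n)$ is supported on $H_n$, this together with inner regularity gives $\supp{\mu}\subset H$. Second, since $f\in C_c(G)$ we get $\int f\,\dd\mu=\lim\int f\,\dd m_f(H_n)=1$, so $\mu\ne 0$. Third, every $h\in H$ is a limit of a sequence $h_n\in H_n$; as each $m_f(H_n)$ is left $h_n$-invariant, $\int\varphi(h_nx)\,\dd m_f(H_n)=\int\varphi\,\dd m_f(H_n)$ for all $\varphi\in C_c(G)$, and letting $n\to\infty$ along the relevant subsequence — the functions $x\mapsto\varphi(h_nx)$ converge uniformly to $x\mapsto\varphi(hx)$ with supports inside one fixed compact set, so the uniform bound from the first step lets us pass to the limit on the left, while the right side converges by weak convergence — yields $\int\varphi(hx)\,\dd\mu(x)=\int\varphi(x)\,\dd\mu(x)$. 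Thus $\mu$ is a nonzero left-$H$-invariant Radon measure carried by $H$, i.e.\ a left Haar measure on $H$, so $\mu=c\,m_f(H)$ for some $c>0$; testing against $f$ forces $c=1$. Since every subsequence of $(m_f(H_n))_n$ then has a sub-subsequence converging to the single measure $m_f(H)$, the whole sequence converges to $m_f(H)$.

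I expect the uniform local bound of the first step to be the main obstacle, since without it one cannot extract a weak limit at all; the only other delicate point is the passage to the limit in the invariance computation, and it is precisely there that the bound $m_f(H_n)(K)\le M_K$ is invoked a second time, to control $\int(\varphi(h_n\cdot)-\varphi(h\cdot))\,\dd m_f(H_n)$. The disjointness and lower-semicontinuity properties of the Chabauty topology used above are classical facts about the Fell topology on closed subsets of a locally compact, second countable group.
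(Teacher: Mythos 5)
Your proof is correct and follows the same skeleton as the paper's: extract a subsequential vague limit, show it is supported on $H$, left $H$-invariant, and normalized by $\int f\,\dd\mu=1$, then conclude by uniqueness of Haar measure and the sub-subsequence trick. The one genuine difference is at the compactness step: the paper simply cites Bourbaki for the weak* compactness of the set of normalized Haar measures on closed subgroups, whereas you prove the needed input by hand via the packing argument (maximal $W$-separated sets in $K\cap H'$, each contributing mass at most $m_f(H')(V\cap H')\le 2$ by left invariance), obtaining a bound on $m_f(H')(K)$ that is uniform over \emph{all} closed subgroups $H'$. That self-contained estimate is correct and is exactly the content behind the cited compactness result; it also does double duty, as you note, in justifying the limit interchange in the invariance computation, where the paper invokes the same uniform bound $m_{i_j}(K)$ without spelling out its source.
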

\begin{proof}
  Let $H_i \to H$ in $\subg$ and set $m_i = m_f(H_i)$.  Fix any
  subsequence $(m_{i_j})$ of $m_i$.  By~\cite[Proposition
  2]{Bourbakiintegration}, the space of Haar measures on closed
  subgroups of $G$ with the normalization $\int f \, \dd m_f (H) = 1$ is
  weak* compact.  Therefore, $(m_{i_j})$ has a further subsequence
  that converges to some Haar measure $m$ for a closed subgroup of
  $G$.

  We claim that $m $ is $H$-invariant. Let $h\in H$ and take a
  sequence of elements $h_{i_j} \in H_{i_j} $ with $h_{i_j} \to h $.
  Then for any continuous function $g : G \to \R$ with compact
  support, we have
  \begin {align*}
    \int g(h x) \, \dd m(x) &= \lim_j \int g(h_{i_j}x) \, \dd m_{i_j}(x) \\
    &= \lim_j \int g(x) \, \dd m_{i_j}(x) \\
    &= \int g(x) \, \dd m (x).
  \end {align*}
  The justification for the double limit is that $g(h_{i_j} \, \cdot )
  \to g( h \, \cdot) $ converge uniformly and all have support within
  some compact $K $, on which we have $m_{i_j}(K)$ bounded
  independently of $j $.  It follows that $m $ is $H$-invariant.

  We now claim that $m$ is supported within $H$.  For given $g \in G
  \setminus H$, let $U$ be an open set that is disjoint from some
  neighborhood of $H$.  By the definition of the Chabauty topology, $U
  \cap H_{i_j} = \emptyset$ for large $j $.  Therefore, $m_{i_j}(U) =
  0$ for large $j$, implying that $m (U) = 0 $.

This shows that $m $ is a left Haar measure on $H $.  Finally, as $$\int f \,
  \dd m=\lim_{j} \int f \, \dd m_{i_j} = \lim_j 1 = 1,$$ it must be that
  $m=m_f(H)$, so the claim follows.
\end{proof}

\begin {lemma}\label{closed}
  Suppose that $G $ is a locally compact topological group. Then the
  subset $\mathcal U \subset \subg$ consisting of subgroups $H $ for
  which $G/H$ has an invariant measure is closed.
\end {lemma}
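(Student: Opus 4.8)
The plan is to reduce the statement to an assertion about modular functions and then apply the continuity of the Haar-measure map from Claim~\ref{haarmeasures}. As recalled in the introduction, $G/H$ carries a nontrivial $G$-invariant measure exactly when the modular function $\mu_G$ of $G$ restricts on $H$ to the modular function $\mu_H$ of $H$; hence
$$\mathcal U = \{H \in \subg \,:\, \mu_G(h) = \mu_H(h) \text{ for all } h \in H\}.$$
So I would take a sequence $H_i \to H$ in $\subg$ with every $H_i \in \mathcal U$, fix $h \in H$, and show $\mu_G(h) = \mu_H(h)$. A basic feature of the Chabauty topology — already used in the proof of Claim~\ref{haarmeasures} — is that there exist $h_i \in H_i$ with $h_i \to h$. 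Since $H_i \in \mathcal U$ we have $\mu_{H_i}(h_i) = \mu_G(h_i)$, and the right-hand side tends to $\mu_G(h)$ because $\mu_G$ is continuous on $G$; so it suffices to prove $\mu_{H_i}(h_i) \to \mu_H(h)$.

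For this I would express the modular function of a closed subgroup through the normalized Haar measures $m_f$ of the appendix: for any closed $K \leq G$ and $k \in K$, the defining relation of $\mu_K$ together with the normalization $\int f\,\dd m_f(K) = 1$ gives
$$\mu_K(k) = \int_G f(xk)\,\dd m_f(K)(x),$$
where $x \mapsto f(xk)$ is continuous with compact support $(\supp f)\,k^{-1}$. Applying this with $(K,k)=(H_i,h_i)$ and with $(K,k)=(H,h)$, fix a compact neighborhood $V$ of $h^{-1}$ and set $C := (\supp f)\,V$. Then $m_f(H_i)\to m_f(H)$ in $\cM(G)$ by Claim~\ref{haarmeasures}; the functions $x\mapsto f(xh_i)$ converge uniformly to $x\mapsto f(xh)$ since $f$ is uniformly continuous and $h_i\to h$; for large $i$ their supports $(\supp f)\,h_i^{-1}$ are contained in $C$; and $m_f(H_i)(C)$ is bounded uniformly in $i$ (a standard consequence of weak convergence of Radon measures). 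These three facts let one pass to the limit under the integral, so $\mu_{H_i}(h_i)\to\mu_H(h)$, whence $\mu_H(h)=\mu_G(h)$. As $h\in H$ was arbitrary, $H\in\mathcal U$, so $\mathcal U$ is closed.

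The one point requiring care is the exchange of limit and integral, i.e.\ deducing $\int f(xh_i)\,\dd m_f(H_i)(x)\to\int f(xh)\,\dd m_f(H)(x)$ from $m_f(H_i)\to m_f(H)$; but this is precisely the vague-convergence estimate — uniform convergence of test functions with support in a common compact set, integrated against measures of uniformly bounded mass on that set — that is already carried out inside the proof of Claim~\ref{haarmeasures}, so I do not expect a genuine obstacle. (Equivalently, the displayed formula shows that $(H,h)\mapsto\mu_H(h)$ is continuous on the closed set $\{(H,h)\in\subg\times G: h\in H\}$, which is the structural content of the argument.)
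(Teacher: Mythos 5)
Your proposal is correct and follows essentially the same route as the paper: reduce to the criterion $\mu_G|_H=\mu_H$, take $h_i\in H_i$ with $h_i\to h$, and pass to the limit using the continuity of $H\mapsto m_f(H)$ from Claim~\ref{haarmeasures}. The only difference is presentational — you extract the scalar $\mu_K(k)$ by integrating the test function $x\mapsto f(xk)$ against $m_f(K)$, whereas the paper phrases the same limit as $h_*m_f(H)=\lim_i (h_i)_*m_f(H_i)=\lim_i\mu_{H_i}(h_i)\,m_f(H_i)$ and identifies the scalar from the normalization of $m_f$.
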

\begin {proof}
  The space $G/H$ admits a $G$-invariant measure if and only if
  the modular function of $G $ restricts to the modular function of $H
  $: that is, $\mu_G (h) =\mu_H (h) $ for all $h\in H$ (see,
  e.g.,~\cite{nachbin1965}).  So, suppose that we have a sequence of
  elements $H_i \in \mathcal U$ and that $H_i \to H$.  If $h\in H,$ we
  want to show that $\mu_G (h) = \mu_H (h)$.  Let $h_i\in H_i $ with
  $h_i \to h$.  Then
  \begin {align*}
    h_* m_f(H) &= \lim_i \, (h_i)_* m_f(H_i) \\
    &= \lim_i \mu_{H_i} (h_i) m_f(H_i) \\
    &= \lim_i \mu_G (h_i) m_f(H_i) \\
    &= \mu_G(h) m_f(H),
  \end {align*}
  so $\mu_G (h) = \mu_H (h)$ and the lemma follows.
\end {proof}

\label{haar}

\section{Uniqueness of factor measures}

We give here a proof of the following standard uniqueness statement
for factor measures in a disintegration with prescribed non-zero fiber
measures.

\begin {proposition}\label{uniquefactors}
  Let $X $ and $Y$ be Borel spaces, $p: X \longrightarrow Y $ be a
  Borel map and $$Y \longrightarrow \mathcal M (X), \ \ y \longmapsto
  \eta_y$$ be a Borel parametrization of a family of $\sigma $-finite
  Borel measures on $X $ such that each $\eta_y $ is nonzero and
  supported on $p ^ {- 1} (y) $.  If $\mu $ and $\mu' $ are $\sigma
  $-finite Borel measures on $Y $, let
$$\lambda =\int_Y \eta_y \, \dd\mu (y), \ \ \lambda' = \int_Y \eta_y\, \dd\mu' (y) . $$  Then $\lambda =\lambda' $ if and only if $ \mu =\mu' .$
\end {proposition}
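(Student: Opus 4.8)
Before reading the authors' argument, here is how I would approach Proposition~\ref{uniquefactors}.

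The implication ``$\mu=\mu'\Rightarrow\lambda=\lambda'$'' is immediate from the defining formula $\lambda(A)=\int_Y\eta_y(A)\,\dd\mu(y)$, so all the content is in the converse. The plan is to \emph{invert} the integral that builds $\lambda$: integrate a single, well-chosen positive cutoff function along the fibers of $p$, and observe that the resulting measure on $Y$ is computable from $\lambda$ alone, hence the same for $\mu$ and $\mu'$.

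First I would reduce to the case in which $\mu$ and $\mu'$ are finite. Indeed, $Y$ splits into countably many Borel pieces $Y_j$ with $\mu(Y_j)+\mu'(Y_j)<\infty$; since each $\eta_y$ is supported on $p^{-1}(y)$, the measures $\lambda$ and $\lambda'$ split correspondingly into $\int_{Y_j}\eta_y\,\dd\mu(y)$ and $\int_{Y_j}\eta_y\,\dd\mu'(y)$, which live on the pairwise disjoint sets $p^{-1}(Y_j)$, so it is enough to prove $\mu|_{Y_j}=\mu'|_{Y_j}$ for each $j$. Once $\mu$ is finite, $\lambda$ is $\sigma$-finite: exhausting each fiber $p^{-1}(y)$ measurably by sets of finite $\eta_y$-measure (a routine measurable selection on the Borel space $X$) exhibits $X$ as a countable union of sets of finite $\lambda$-measure. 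Consequently there is a Borel $\Phi\colon X\to(0,1]$ with $\int_X\Phi\,\dd\lambda<\infty$.

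Now put $\psi(y):=\int_X\Phi\,\dd\eta_y\in(0,\infty]$; this is Borel because $y\mapsto\eta_y$ is a Borel parametrization, and it is strictly positive everywhere because $\Phi>0$ and $\eta_y\neq0$. For a Borel set $B\subseteq Y$, Tonelli's theorem for the disintegration together with the fact that $\eta_y$ is carried by $p^{-1}(y)$ gives
\[
  \int_{p^{-1}(B)}\Phi\,\dd\lambda
  =\int_Y\Big(\int_{p^{-1}(B)}\Phi\,\dd\eta_y\Big)\,\dd\mu(y)
  =\int_B\psi(y)\,\dd\mu(y),
\]
since $\int_{p^{-1}(B)}\Phi\,\dd\eta_y$ equals $\psi(y)$ for $y\in B$ and vanishes for $y\notin B$. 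The left-hand side depends only on $\lambda$ and $\Phi$, and the identical computation with $\mu',\lambda'$ in place of $\mu,\lambda$ yields $\int_{p^{-1}(B)}\Phi\,\dd\lambda'=\int_B\psi\,\dd\mu'$ with the \emph{same} $\psi$. Hence $\lambda=\lambda'$ forces $\int_B\psi\,\dd\mu=\int_B\psi\,\dd\mu'$ for all Borel $B$, i.e.\ $\psi\cdot\mu=\psi\cdot\mu'$ as measures on $Y$. Finally $\int_Y\psi\,\dd\mu=\int_X\Phi\,\dd\lambda<\infty$ and likewise $\int_Y\psi\,\dd\mu'<\infty$, so $\psi<\infty$ both $\mu$- and $\mu'$-a.e.; on the conull Borel set $\{\psi<\infty\}$ the function $\psi$ takes values in $(0,\infty)$, so dividing the equality $\psi\mu=\psi\mu'$ by $\psi$ there gives $\mu=\mu'$.

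The only step I expect to require care is the production of the cutoff $\Phi$, i.e.\ the $\sigma$-finiteness of $\lambda$ (equivalently, that the fiberwise $\sigma$-finiteness of the $\eta_y$ can be realized measurably and uniformly in $y$, at least $(\mu+\mu')$-almost everywhere); after the reduction to finite $\mu,\mu'$ this is a standard, if slightly technical, measurable-selection fact on a standard Borel space. Everything else is elementary measure theory, and the hypothesis that each $\eta_y$ is nonzero is used exactly to ensure $\psi>0$, which is what makes the concluding division by $\psi$ legitimate.
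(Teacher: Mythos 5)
Your argument is correct and is essentially the paper's: both proofs invert the disintegration by integrating a strictly positive Borel function whose fiber integrals are positive and finite (the paper normalizes those integrals to $1$ via the exhaustion sets $U_{n(y)}$ and the contradiction argument showing $\eta_y(U_{n(y)})<\infty$ a.e.; you keep a general $\Phi$ and divide by $\psi=\int\Phi\,\dd\eta_y$ at the end). The only real divergence is at the start: the paper simply \emph{asserts} that $\lambda$ is $\sigma$-finite (true in all of its applications, where $\lambda$ comes from a Rohlin disintegration or a Haar measure), whereas you try to derive it from an unproved measurable-selection lemma; since that lemma is the one genuinely delicate point of your write-up and is not needed if $\sigma$-finiteness of $\lambda$ is taken as a standing hypothesis, you should either state that hypothesis explicitly (as the paper implicitly does) or supply the selection argument.
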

\begin {proof}
  The backwards implication is immediate.  So, assume $\lambda
  =\lambda' $.  We claim that there is a Borel function $f: X \to \R $
  such that
  $$\int_X f  \, \dd\eta_y = 1 $$
  for both $\mu $-a.e.\ and $\mu' $-a.e.\ $y\in Y $.  This will finish
  the proof, since if $U\subset Y $ is Borel then
  $$\mu (U) = \int_X f \cdot 1_U ({p (x)} ) \, \dd\lambda (x) =  \int_X f \cdot 1_U ({p (x)} ) \, \dd\lambda' (x) = \mu' (U) .$$

  The measure $\lambda $ is $\sigma $-finite, so let $U_1 \subset U_2
  \subset \cdots $ be an increasing sequence of Borel subsets of $X $
  with $\lambda (U_i) < \infty$ and $\cup_i U_i = X $.  For each $y\in
  Y $, let $n (y) $ be the minimum $i $ such that $\eta_y (U_i) >0 $.
  Define
  $$f: X \to \R, \ \ f (x)= \sum_{y\in Y} \frac {1_{U_{n (y)} \cap p ^ {- 1} (y)}(x) }  { \eta_y (U_{n (y)})}. $$
  Then $f$ is Borel, and the claim will follow if we show that
  $\eta_y(U_{n (y)}) <\infty $ for both $\mu $-a.e.\ and $\mu' $-a.e.\
  $y\in Y $.  Assume by contradiction that there exists a $V \subseteq Y $ with
  $\eta_y(U_{n(y)}) =\infty $ for all $y\in V $, and, say, $\mu (V) >0
  $.  It follows that there exists a $W \subseteq V$, with $\mu(W)>0$,
  and an $N$ such that $\eta_y(U_N) =\infty $ for all $y\in W $. But
  then
  $$\lambda (U_N) = \int_Y \eta_y (U_N) \, \dd\mu \geq \int_W \eta_y(U_N) \, \dd\mu = \infty, $$
  which contradicts the initial choice of the sets $U_i $.
\end {proof}

\bibliography{manifolds}
\end{document}